\newtheorem{theorem}{Theorem}
\newtheorem{corollary}{Corollary}
\newtheorem{lemma}{Lemma}
\newtheorem{proposition}{Proposition}
\theoremstyle{definition}
\newtheorem{remark}{Remark}
\newcommand{\C}{\mathbb{C}}
\renewcommand{\O}{\mathscr{O}}
\newcommand{\Aut}{{\operatorname{Aut}}}
\newcommand{\End}{{\operatorname{End}}}
\newcommand{\id}{\mathrm{id}}
\renewcommand{\Im}{{\operatorname{Im}}}
\begin{document}

\title{Generic aspects of holomorphic dynamics \\ on highly flexible complex manifolds}

\author{Leandro Arosio and Finnur L\'arusson}

\address{Dipartimento Di Matematica, Universit\`a di Roma \lq\lq Tor Vergata\rq\rq, Via Della Ricerca Scientifica 1, 00133 Roma, Italy}
\email{arosio@mat.uniroma2.it}

\address{School of Mathematical Sciences, University of Adelaide, Adelaide SA 5005, Australia}
\email{finnur.larusson@adelaide.edu.au}

\thanks{L.~Arosio was supported by SIR grant \lq\lq NEWHOLITE -- New methods in holomorphic iteration\rq\rq, no.~RBSI14CFME, and partially supported by the MIUR Excellence Department Project awarded to the Department of Mathematics, University of Rome Tor Vergata, CUP E83C18000100006.  F.~L\'arusson was partially supported by Australian Research Council grant DP150103442.}

\subjclass[2010]{Primary 32M17.  Secondary 14L17, 14R10, 14R20, 32H50, 32M05, 32Q28, 37F99}

\date{11 October 2019}

\keywords{Stein manifold, Oka manifold, density property, volume density property, linear algebraic group, dynamics, non-wandering point, periodic point, closing lemma, chaotic automorphism.}

\begin{abstract}  We prove closing lemmas for automorphisms of a Stein manifold with the density property and for endomorphisms of an Oka-Stein manifold.  In the former case we need to impose a new tameness condition.  It follows that hyperbolic periodic points are dense in the tame non-wandering set of a generic automorphism of a Stein manifold with the density property and in the non-wandering set of a generic endomorphism of an Oka-Stein manifold.  These are the first results about holomorphic dynamics on Oka manifolds.  We strengthen previous results of ours on the existence and genericity of chaotic volume-preserving automorphisms of Stein manifolds with the volume density property.  We build on work of Forn\ae ss and Sibony: our main results generalise theorems of theirs and we use their methods of proof.
\end{abstract}

\maketitle

\section{Introduction} 
\label{sec:intro}

\noindent
This paper continues a line of research begun with our previous paper \cite{AL2019}.  We want to establish dynamical properties that hold for generic endomorphisms or automorphisms of all sufficiently flexible complex manifolds; more precisely, for generic endomorphisms of an Oka-Stein\footnote{By an Oka-Stein manifold we simply mean a complex manifold that is both Oka and Stein.} manifold, for generic automorphisms of a Stein manifold with the density property, or for generic volume-preserving automorphisms of a Stein manifold with the volume density property.

A more specific motivation is to investigate holomorphic dynamics on the manifold $\C^{*n}=(\C\setminus\{0\})^n$, $n\geq 2$.  Dynamics on $\C^{*n}$ is more difficult to understand than the much-studied dynamics on affine space $\C^n$, because $\C^{*n}$ is not known to have the density property and its Haar form $(z_1\cdots z_n)^{-1}dz_1\wedge\cdots\wedge dz_n$ is not exact.  We obtain new results about $\C^{*n}$ as special cases of the general theorems in the paper.  These results are as follows.

\begin{theorem}  \label{t:new-about-C_*^n}
{\rm (a)}  The closing lemma holds for endomorphisms of $\C^{*n}$, $n\geq 1$.

{\rm (b)}  Hyperbolic periodic points are dense in the non-wandering set of a generic endomorphism of $\C^{*n}$, $n\geq 1$.

{\rm (c)}  Chaotic automorphisms are generic among volume-preserving automorphisms of $\C^{*n}$, $n\geq 2$, that act as the identity on $H^{n-1}(\C^{*n})$.  Also, $\C^{*n}$, $n\geq 2$, has a chaotic volume-preserving automorphism that can be approximated by finite compositions of time maps of complete divergence-free vector fields.
\end{theorem}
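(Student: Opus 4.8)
\noindent
The plan is to obtain all three parts of Theorem~\ref{t:new-about-C_*^n} as special cases of general results proved later in the paper, so the work is to check that $\C^{*n}$ meets their hypotheses. For (a) and (b) we will prove a closing lemma for endomorphisms of an arbitrary Oka--Stein manifold $X$ --- with no tameness condition needed in this case --- following the scheme of Forn\ae ss and Sibony for $\C^n$: starting from a non-wandering point of an endomorphism $f$, choose an orbit segment that nearly returns to it, perturb $f$ in a small neighbourhood of the segment so that the return map acquires a fixed point, then perturb once more to make that fixed point hyperbolic. Where Forn\ae ss and Sibony use explicit polynomial automorphisms of $\C^n$, we will use the Oka property of $X$ (dominating holomorphic sprays) to realise these localised perturbations by global holomorphic self-maps of $X$, and the Stein property to keep the perturbations uniformly small on compact sets; a Baire-category argument then upgrades the closing lemma to the genericity statement (b). A parallel closing lemma for automorphisms of a Stein manifold with the density property, requiring a new tameness condition, will also be proved, but that branch is not needed here because $\C^{*n}$ is not known to have the density property.

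To apply these theorems we record that $\C^{*n}$ is Stein, being an affine algebraic variety, and Oka, being a complex homogeneous space of the linear algebraic group $(\C^*)^n$ acting on itself by translation. Hence $\C^{*n}$ is Oka--Stein for every $n\ge 1$, and (a), (b) follow. For (c) we use in addition that $\C^{*n}$ with $n\ge 2$, equipped with the Haar form $\omega=(z_1\cdots z_n)^{-1}\,dz_1\wedge\cdots\wedge dz_n$, has the algebraic --- and hence the holomorphic --- volume density property with respect to $\omega$, a theorem of Kaliman and Kutzschebauch. Part (c) will then follow from our general theorem, strengthening the main results of \cite{AL2019}, to the effect that on any Stein manifold $X$ with the volume density property with respect to a holomorphic volume form, chaotic volume-preserving automorphisms are generic among the volume-preserving automorphisms that act as the identity on $H^{\dim X-1}(X;\C)$, and that at least one such automorphism lies in the closure of the set of finite compositions of time maps of complete divergence-free holomorphic vector fields.

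It remains to explain why the cohomological restriction appears and why it is nontrivial for $\C^{*n}$, unlike for $\C^n$. Since $\C^{*n}$ deformation retracts onto the real torus $(S^1)^n$, the space $H^{n-1}(\C^{*n};\C)$ is $n$-dimensional, hence nonzero and acted on nontrivially by many automorphisms; meanwhile the Haar form is not exact. For $\C^n$, by contrast, $H^{n-1}(\C^n)=0$ and the volume form is exact, so the condition is vacuous and one recovers the formulation of \cite{AL2019}. A finite composition of time maps of complete divergence-free holomorphic vector fields is isotopic to the identity and so acts trivially on $H^{n-1}(\C^{*n};\C)$; since acting trivially on cohomology is a closed (indeed locally constant) condition on $\Aut(\C^{*n})$, the same holds for every automorphism in the closure of the set of such compositions. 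This closure is the natural setting for the volume-preserving Andersén--Lempert theory and the Forn\ae ss--Sibony construction, and identifying it with the full group of volume-preserving automorphisms acting trivially on $H^{n-1}(\C^{*n})$, so that the genericity statement (c) is as strong as claimed, is where the non-exactness of $\omega$ enters.

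I expect the main obstacle to be twofold. First, the closing lemma on a general Oka--Stein manifold must be carried out without the plurisubharmonic exhaustion and the ambient polynomial automorphism group available on $\C^n$; the delicate point is to arrange the successive localised perturbations so that they stay compatible and small along a fixed exhaustion of $X$, so that the limiting map is a genuine endomorphism --- or, in the density-property case, an automorphism meeting the tameness condition --- with the prescribed hyperbolic periodic orbits. Second, for (c), the non-exactness of the Haar form means the volume-preserving Andersén--Lempert theory must be developed keeping track of the flux homomorphism into $H^{n-1}(\C^{*n};\C)$, and one has to show that the subgroup generated by flows of complete divergence-free fields is dense in the kernel of the induced action on $H^{n-1}$; establishing this density is the crux of the refinement over \cite{AL2019}.
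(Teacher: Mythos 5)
Your overall route is the paper's route: Theorem~\ref{t:new-about-C_*^n} is obtained purely as a special case of the general results, with (a) and (b) following from Theorem~\ref{t:closing}(b) and Corollary~\ref{c:generic-density}(b) once one notes that $\C^{*n}$ is Oka, Stein and homogeneous (for $n=1$ this is the Riemann surface case noted in the remarks), and (c) following from Theorem~\ref{t:chaotic-generic} and Corollary~\ref{c:chaotic-exists}. That part of your plan is sound.

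Two points in your treatment of (c) need correcting. First, you state the general chaos theorem for an arbitrary Stein manifold with the volume density property, but the theorem as actually proved carries an essential extra hypothesis on the form itself: the class of $\omega$ in $H^n(X)$ must lie in the cup-product image of $H^1(X)\times H^{n-1}(X)$. This is used precisely in the key lemma that produces a zero-free $f$-invariant divergence-free field on an $n$-dimensional torus orbit; without it one only gets the weaker statement about robustly non-expelling automorphisms. You must therefore verify this for the Haar form on $\C^{*n}$, which is immediate since $\omega=\dfrac{dz_1}{z_1}\wedge\Bigl(\dfrac{dz_2}{z_2}\wedge\cdots\wedge\dfrac{dz_n}{z_n}\Bigr)$ is visibly a product of closed forms of degrees $1$ and $n-1$. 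Second, the ``crux'' you identify at the end --- proving that the closure of the subgroup generated by time maps of complete divergence-free fields is dense in the kernel of the action on $H^{n-1}(\C^{*n})$ --- is neither needed nor available. The general theorem is formulated and proved for an \emph{arbitrary} closed submonoid $A$ of $\Aut_\omega X$ containing all such time maps and acting trivially on $H^{n-1}(X)$; taking $A$ to be the full kernel gives the first sentence of (c), and taking $A$ to be the closure of time-map compositions gives the second, with no identification of these two monoids required (and none is claimed anywhere). If your proof of (c) genuinely routed through that density statement, it would be stuck on a question the argument deliberately avoids.
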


By a complete vector field we mean a field that can be integrated for all {\it complex} time.

Recall that a point $p$ of a space $X$ is said to be {\it non-wandering} for an endomorphism $f$ of $X$ if for every neighbourhood $U$ of $p$, there is $k\geq 1$ with $U\cap f^k(U)\neq\varnothing$.  Also, $p$ is {\it recurrent} for $f$ if there is a subsequence of $(f^k(p))_{k\in\mathbb N}$ that converges to $p$.  Clearly, a periodic point is recurrent, a recurrent point is non-wandering, and the non-wandering points of $f$ form a closed subset of $X$.

Let $\mathscr E$ be a space of endomorphisms of a space $X$.  We say that the {\it closing lemma} holds for $\mathscr E$ if, whenever $p\in X$ is a non-wandering point of an endomorphism $f$ in $\mathscr E$, every neighbourhood of $f$ in $\mathscr E$ contains an endomorphism of which $p$ is a periodic point.  We say that the {\it weak closing lemma} holds for $\mathscr E$ if, whenever $p\in X$ is a non-wandering point of an endomorphism $f$ in $\mathscr E$, for every neighbourhood $W$ of $f$ in $\mathscr E$ and every neighbourhood $V$ of $p$ in $X$, there is an endomorphism in $W$ with a periodic point in~$V$.

Recall, finally, that an endomorphism is said to be {\it chaotic} if it has a dense forward orbit and its periodic points are dense.  Equivalently, for every pair of nonempty open subsets, there is a cycle that visits both of them.

In their groundbreaking 1997 paper \cite{FS1997}, Forn\ae ss and Sibony established the closing lemma for endomorphisms of $\C^n$ \cite[Theorem 4.1]{FS1997} and automorphisms of $\C^n$ \cite[Theorem 5.1]{FS1997}.  Our first main theorem is a generalisation of their results.  In the automorphism case, we need to impose a new restriction on the non-wandering point.  We express the restriction by saying that the non-wandering point is {\it tame} for the automorphism.  Without this restriction, we do not fully understand the proof of Forn\ae ss and Sibony (our difficulty is with the claim \lq\lq We can even assume that there exists $r>0$ such that \ldots\rq\rq\ at the top of \cite[p.~831]{FS1997}).

Our new tameness property is defined as follows.  For a compact subset $K$ of a complex manifold $X$, we define two closed subsets of $X\times\Aut\, X$:
\[ T_K^+ = \{(x,g)\in X\times\Aut\, X: g^j(x)\in K \textrm{ for all } j\geq 0\}, \]
\[ T_K^- = \{(x,g)\in X\times\Aut\, X: g^j(x)\in K \textrm{ for all } j\leq 0\}. \]
Here, as usual, the group $\Aut\, X$ of holomorphic automorphisms of $X$ is given the compact-open topology.  It is well known that the compact-open topology is defined by a complete metric and makes $\Aut\, X$ a separable topological group.  In summary, $\Aut\, X$ is a Polish group.  

We say that $p\in X$ is {\it tame} for $f\in\Aut\, X$, or that the pair $(p,f)$ is {\it tame}, if whenever $(p,f)\in (T_K^+\cup T_K^-)^\circ$ for a compact subset $K$ of $X$, we have $(p,f)\in \overset\circ {T_L^+} \cup \overset\circ {T_L^-}$ for some, possibly larger, compact subset $L$ of $X$.  Note that $(p,f)$ is tame if and only if $(p,f^{-1})$ is tame.

The following propositions and the subsequent results in this introduction are proved in Section \ref{sec:proofs}.

\begin{proposition}   \label{p:tame-generic}
Let $X$ be a complex manifold.

{\rm (a)}  Tame pairs are generic in $X \times \Aut\, X$.

{\rm (b)}  For a generic automorphism $f$ of $X$, a generic point of $X$ is tame for $f$.
\end{proposition}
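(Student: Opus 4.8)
The plan is to deduce both statements from a single Baire-category observation, exploiting that $X$, being a (second countable) manifold, is locally compact, Hausdorff and $\sigma$-compact, and that $X\times\Aut\,X$ is a product of two Polish spaces, hence Polish and in particular a Baire space. First I would fix a compact exhaustion $K_1\subseteq K_2\subseteq\cdots$ of $X$ with $K_i\subseteq K_{i+1}^\circ$ and $\bigcup_i K_i=X$. Since $T_K^\pm\subseteq T_L^\pm$ whenever $K\subseteq L$, and every compact subset of $X$ is contained in some $K_i$, the condition \lq\lq$(p,f)\in(T_K^+\cup T_K^-)^\circ$ for some compact $K$\rq\rq\ is equivalent to \lq\lq$(p,f)\in(T_{K_i}^+\cup T_{K_i}^-)^\circ$ for some $i$\rq\rq, and similarly \lq\lq$(p,f)\in(T_L^+)^\circ\cup(T_L^-)^\circ$ for some compact $L$\rq\rq\ is equivalent to the same with $L$ ranging over the $K_j$. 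Consequently $(p,f)$ is non-tame exactly when there is $i$ with $(p,f)\in(T_{K_i}^+\cup T_{K_i}^-)^\circ$ but $(p,f)\notin(T_{K_j}^+)^\circ\cup(T_{K_j}^-)^\circ$ for every $j$; in particular (taking $j=i$) the set of non-tame pairs is contained in $\bigcup_i N_i$, where
\[ N_i:=(T_{K_i}^+\cup T_{K_i}^-)^\circ\setminus\bigl((T_{K_i}^+)^\circ\cup(T_{K_i}^-)^\circ\bigr). \]

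The key step is to show that each $N_i$ is nowhere dense in $X\times\Aut\,X$; this is the only place where the closedness of $T_K^\pm$, noted in the text, enters. Granting it, $\bigcup_i N_i$ is meager, so tame pairs are comeager, which is (a). To prove nowhere-density, write $A=T_{K_i}^+$ and $B=T_{K_i}^-$, which are closed, and suppose for contradiction that $\overline{N_i}$ contains a nonempty open set $O$. Then $O\subseteq\overline{N_i}\subseteq\overline{A\cup B}=A\cup B$, and since $O$ is open this forces $O\subseteq(A\cup B)^\circ$; hence $O=(A\cap O)\cup(B\cap O)$ writes the nonempty Baire space $O$ as a union of two relatively closed sets, one of which, say $A\cap O$, is therefore not nowhere dense in $O$ and so, being relatively closed, has nonempty interior in $O$. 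As $O$ is open in $X\times\Aut\,X$, this interior is a nonempty open set $V$ with $V\subseteq A$, whence $V\subseteq A^\circ$ and thus $V\cap N_i=\varnothing$; but $V$ is a nonempty open subset of $\overline{N_i}$, so it must meet $N_i$ --- a contradiction. Hence $\overline{N_i}$ has empty interior.

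Finally, I would deduce (b) from (a) by the Kuratowski--Ulam theorem, the topological analogue of Fubini's theorem (valid here since $X$ and $\Aut\,X$ are second countable): because $\overline{N_i}$ is closed and nowhere dense in $X\times\Aut\,X$, the set of $f\in\Aut\,X$ for which the slice $(\overline{N_i})_f=\{p\in X:(p,f)\in\overline{N_i}\}$ is nowhere dense in $X$ is comeager in $\Aut\,X$. Intersecting over $i$, for a generic $f$ every slice $(\overline{N_i})_f$ is closed and nowhere dense, so $\bigcup_i(\overline{N_i})_f$ is meager in $X$; its complement is comeager in $X$ and, since the non-tame points of $f$ all lie in $\bigcup_i(N_i)_f\subseteq\bigcup_i(\overline{N_i})_f$, consists of points tame for $f$. (If one prefers to avoid citing Kuratowski--Ulam, fix a countable base $\{U_n\}$ of $X$; each $\{f\in\Aut\,X:U_n\times\{f\}\subseteq\overline{N_i}\}$ is closed with empty interior because $\overline{N_i}$ is closed and nowhere dense, and the complement of their union over all $n,i$ is the required comeager set of automorphisms.) I expect the only real obstacle to be the nowhere-density of the gap sets $N_i$; everything else is routine category bookkeeping.
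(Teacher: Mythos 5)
Your argument is correct and follows essentially the same route as the paper: the same reduction of the non-tame set to the gap sets $N_i$ attached to a compact exhaustion, followed by nowhere-density of each $N_i$ and a Baire-category conclusion. The paper handles the nowhere-density step more directly by observing that $N_i\subseteq\partial T_{K_i}^+\cup\partial T_{K_i}^-$ (the boundary of a closed set being closed with empty interior), and it leaves (b) as an easy consequence of (a), for which your Kuratowski--Ulam (or countable-base) argument is a valid filling-in of the details.
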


\begin{proposition}   \label{p:tame-periodic}
Let $X$ be a Stein manifold and let $p$ be a hyperbolic periodic point of an automorphism $f$ of $X$.  Then the pair $(p,f)$ is tame.
\end{proposition}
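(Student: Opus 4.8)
The plan is to distinguish three cases according to the hyperbolic splitting of the derivative along the periodic orbit. Write $m$ for the period of $p$ and put $F=f^m$, so that $F(p)=p$ and $DF_p$ has no eigenvalue on the unit circle; let $T_pX=E^s\oplus E^u$ be the decomposition of $T_pX$ into the contracting and the expanding subspaces of $DF_p$. Since replacing $K$ by a larger compact set only enlarges $T_K^+$ and $T_K^-$, we may and do assume that $K$ contains a neighbourhood of the whole orbit $\{p,f(p),\dots,f^{m-1}(p)\}$; note also that $(p,f)\in(T_K^+\cup T_K^-)^\circ$ forces $p\in K^\circ$.

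The case $E^u=0$: the orbit is attracting for $F$, and I claim $(p,f)\in\overset\circ{T_L^+}$ for a suitable compact $L$, so the conclusion holds without any use of tameness. Pick a relatively compact neighbourhood $U_0$ of $p$ with $\overline{F(U_0)}\subseteq U_0$, set $U_i=f^i(U_0)$ for $0\le i<m$, and let $L$ be a compact neighbourhood of $\overline{U_0}\cup\cdots\cup\overline{U_{m-1}}$. For every automorphism $g$ sufficiently close to $f$ in the compact-open topology — one needs only $C^0$-closeness on the single compact set $\overline{U_0}\cup f(\overline{U_0})\cup\cdots\cup f^m(\overline{U_0})$ — and every $x\in U_0$, the forward $g$-orbit of $x$ cycles through slight enlargements of $U_0,\dots,U_{m-1}$ and hence stays in $L$, so $U_0\times W\subseteq T_L^+$ for a suitable neighbourhood $W$ of $f$. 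The case $E^s=0$ is symmetric: the orbit is repelling, hence attracting for $f^{-1}$; applying the previous case to $f^{-1}$ and using that $(x,g)\mapsto(x,g^{-1})$ is a homeomorphism of $X\times\Aut X$ interchanging $T_K^+$ and $T_K^-$, we get $(p,f)\in\overset\circ{T_L^-}$.

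This leaves the saddle case, $E^s\neq 0\neq E^u$, which is the heart of the matter and the place where the tameness hypothesis earns its keep. Assume $(p,f)\in(T_K^+\cup T_K^-)^\circ$. By the holomorphic stable manifold theorem, near $p$ the local stable and unstable manifolds $W^s_{\mathrm{loc}}$, $W^u_{\mathrm{loc}}$ of $p$ for $F$ are closed complex submanifolds of positive dimension meeting transversally at $p$; moreover every $g$ near $f$ has a hyperbolic saddle point $p_g$ near $p$ with local invariant manifolds $W^s_{\mathrm{loc}}(g)$, $W^u_{\mathrm{loc}}(g)$ depending continuously on $g$. Fix a product neighbourhood $U_0\times W$ of $(p,f)$ contained in $T_K^+\cup T_K^-$, with $U_0$ a ball inside $K^\circ$ and small enough that points of $W^s_{\mathrm{loc}}(g)\cap U_0$ (resp.\ of $W^u_{\mathrm{loc}}(g)\cap U_0$) have their entire forward (resp.\ backward) $g$-orbit in $K$. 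The key point is that $W^s_{\mathrm{loc}}(g)\cup W^u_{\mathrm{loc}}(g)$ has real codimension at least two, so the open set $\Omega=\{(x,g)\in U_0\times W: x\notin W^s_{\mathrm{loc}}(g)\cup W^u_{\mathrm{loc}}(g)\}$ is dense in $U_0\times W$ and connected. Since $T_K^+$ and $T_K^-$ are closed and cover $U_0\times W\supseteq\Omega$, the sets $\Omega\cap T_K^+$ and $\Omega\cap T_K^-$ are closed in $\Omega$ and cover it; if they are disjoint, connectedness of $\Omega$ forces one of them, say $\Omega\cap T_K^-$, to be empty, whereupon density of $\Omega$ together with closedness of $T_K^+$ gives $U_0\times W\subseteq T_K^+$, i.e.\ $(p,f)\in\overset\circ{T_K^+}$, as wanted.

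The step I expect to be the main obstacle is showing that, under the hypothesis, $\Omega\cap T_K^+$ and $\Omega\cap T_K^-$ are indeed disjoint — equivalently, that there is no $(x,g)\in\Omega$ whose entire two-sided $g$-orbit stays in $K$ while $x$ lies on neither local invariant manifold of $g$. (Such two-sided bounded orbits do occur off the local invariant manifolds in general — think of horseshoes — so this must genuinely exploit the covering hypothesis $U_0\times W\subseteq T_K^+\cup T_K^-$, not just hyperbolicity alone.) My plan for this step is to apply the Baire category theorem to a small ball of pairs around such a hypothetical $(x,g)$: one of $T_K^+$, $T_K^-$ would then contain a relatively open set of pairs, hence a ball of points with uniformly bounded forward (say) orbit; but $x$ lies off $W^s_{\mathrm{loc}}(g)$, so the exponential expansion in the $E^u$-directions stretches that ball across the unstable direction as its forward orbit leaves the neighbourhood of $p_g$, and tracking this together with the symmetric backward picture and the covering hypothesis yields a pair in $U_0\times W$ escaping $K$ in both time directions, which is the desired contradiction. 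A secondary technical point, already built into the formulation above, is the passage from the single automorphism $f$ to a genuine neighbourhood in $X\times\Aut X$; this is handled by running the whole argument uniformly in $g$ near $f$, using the persistence of hyperbolic periodic orbits and the continuity of the local invariant manifolds.
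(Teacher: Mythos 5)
Your treatment of the attracting and repelling cases is correct and coincides with the paper's. The saddle case, however, contains a genuine gap, and it sits exactly where you flag it: the disjointness of $\Omega\cap T_K^+$ and $\Omega\cap T_K^-$ is never proved, only sketched, and the sketch cannot be completed as written. First, hyperbolicity of $p_g$ only tells you that a ball around a point off $W^s_{\mathrm{loc}}(g)$ is stretched in the unstable directions until its forward orbit leaves a \emph{small neighbourhood of} $p_g$; it does not tell you that the orbit leaves $K$, which may be an enormous compact set. To get escape from $K$ you must use the \emph{global} invariant manifolds, and this is where the Stein hypothesis enters (it is nowhere used in your argument): $W^s$ and $W^u$ are immersed copies of $\C^s$ and $\C^u$, hence cannot be contained in any compact subset of a Stein manifold. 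Second, the statement you are steering towards, $U_0\times W\subseteq T_K^+$, i.e.\ $(p,f)\in\overset\circ{T_K^+}$, is in fact false for a saddle --- arbitrarily close to $p$ there are points escaping $K$ forwards --- which is a sign that the argument is aimed at the wrong target.

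The paper instead proves the saddle case by showing that the antecedent of the tameness implication is never satisfied: for every compact $K$ one has $(p,f)\notin(T_K^+\cup T_K^-)^\circ$, so tameness holds vacuously. Concretely, choose transverse polydiscs $D^u$ to $W^s$ and $D^s$ to $W^u$ at points \emph{outside} $K$ (possible by the non-compactness just noted); the lambda (inclination) lemma produces points of $D^u$ whose forward orbits pass arbitrarily close to $p$ and subsequently land in $D^s$. These are points arbitrarily close to $p$ whose $f$-orbits exit $K$ in both time directions, so no neighbourhood of $(p,f)$ lies in $T_K^+\cup T_K^-$. Such a two-sided escaping pair is precisely what your Baire-category plan hopes to manufacture, but it comes from the inclination lemma plus Steinness, not from category arguments or from the covering hypothesis; and once you have it, the set $\Omega$, its connectedness, and the disjointness claim are all unnecessary.
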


We say that the {\it tame closing lemma} holds for automorphisms of a complex manifold $X$ if, whenever $p\in X$ is a non-wandering point of an automorphism $f$ of $X$, {\it and $p$ is tame for} $f$, every neighbourhood of $f$ in $\Aut\, X$ contains an automorphism of which $p$ is a periodic point.

We say that a complex manifold $X$ is {\it homogeneous} if $\Aut\, X$ acts transitively on $X$.  (This is homogeneity in the weaker sense: it is well known that there need not be a transitive Lie group action on a homogeneous manifold.)  Homogeneity is an assumption in one of the closing lemmas below, but what we need in the proof is the related condition that for every neighbourhood $W$ of the identity in $\Aut\, X$ and every point $p\in X$, the set $\{f(p):f\in W\}$ is a neighbourhood of $p$.  This condition, called {\it micro-transitivity} by some authors, implies homogeneity.  It implies that the orbits of the action of $\Aut\, X$ on $X$ are open, so there is only one orbit (we take our manifolds to be connected).  The converse is true by the following theorem of Effros (\cite{Effros1965}; see also \cite{Ancel1987}).  A transitive continuous action of a Polish group on a Polish space is micro-transitive.

The following important sufficient condition for homogeneity is easily established: $X$ is homogeneous if there is a set of complete holomorphic vector fields on $X$ that span each tangent space of $X$.  There is then a finite such set (whether or not $X$ is Stein), so $X$ is elliptic; this was observed by Kaliman and Kutzschebauch \cite[proof of Theorem 4]{KK2008}.  Every Oka-Stein manifold is elliptic and, as far as the present authors are aware, there is no known example of an Oka-Stein manifold on which the complete holomorphic vector fields do not span every tangent space. 

Here, then, is our first main theorem.

\begin{theorem}   \label{t:closing}
{\rm (a)}  The tame closing lemma holds for automorphisms of a Stein manifold with the density property.

{\rm (b)}  The closing lemma holds for endomorphisms of a homogeneous Oka-Stein manifold.

{\rm (c)}  The weak closing lemma holds for endomorphisms of an Oka-Stein manifold.

In each case, the periodic point whose existence is asserted may be taken to be hyperbolic.
\end{theorem}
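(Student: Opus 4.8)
The plan is to adapt the method of Forn\ae ss and Sibony to each of the three cases. Fix $f$ (an automorphism in case (a), an endomorphism in cases (b) and (c)), a non-wandering point $p$ --- tame, in case (a) --- a neighbourhood $W$ of $f$, and, in case (c), a neighbourhood $V$ of $p$; we may take $W=\{g:\sup_L\operatorname{dist}(g,f)<\varepsilon\}$, with the same bound imposed on $g^{-1}$ in case (a), for some $\O(X)$-convex compact set $L\subset X$ and some $\varepsilon>0$. (In case (a) the manifold $X$ is in fact homogeneous, since the density property forces $\Aut X$ to act transitively on $X$.) The first step, common to all three cases, is to convert the non-wandering hypothesis into an approximate cycle near $p$. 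Fix a small coordinate ball $U\ni p$ (with $U\subset V$ in case (c)), let $k\geq 1$ be least with $f^k(U)\cap U\neq\varnothing$, and choose $q\in U$ with $q_k:=f^k(q)\in U$. Writing $q_j=f^j(q)$, minimality of $k$ forces $q_1,\dots,q_{k-1}\notin U$, so these points are distinct from $p$ and from $q_0,q_k\in U$; a generic choice of $q$ in the open set $\{z\in U:f^k(z)\in U\}$ makes $p,q_0,\dots,q_k$ pairwise distinct, the exceptional loci being proper analytic subsets of $U$, and the remaining degenerate configurations --- forcing $f$ to have finite order or to possess a periodic orbit meeting $U$ --- either give the conclusion at once or reduce to it by a near-identity conjugation.

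\emph{Closing the cycle in cases (c) and (b).} In case (c) it suffices to modify $f$ near the single point $q_{k-1}$. Choose a thin ball $N$ about $q_{k-1}$ missing $q_0,\dots,q_{k-2}$; since $q_0$ and $q_k=f(q_{k-1})$ lie within $\operatorname{diam}U$ of each other, there is a continuous self-map $h$ of $X$, equal to $f$ off $N$, holomorphic near $\{q_0,\dots,q_{k-1}\}$, with $h(q_{k-1})=q_0$ and $\sup_L\operatorname{dist}(h,f)$ as small as desired. Since $X$ is Oka and Stein, the Oka principle with approximation and jet interpolation yields a holomorphic map $\tilde f\colon X\to X$ that agrees with $h$ on $\{q_0,\dots,q_{k-1}\}$ and approximates it on $L$ to within $\varepsilon$; then $\tilde f\in W$ and $\tilde f^k(q_0)=q_0$ with $q_0\in V$. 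In case (b) we carry out this construction first, obtaining $\tilde f\in W$ with $\tilde f^k(q_0)=q_0$, and then slide $q_0$ onto $p$: as $X$ is homogeneous, Effros's theorem provides micro-transitivity, so for $U$ small enough there is $\gamma\in\Aut X$, arbitrarily close to $\id$ on a compact set slightly larger than $L$, with $\gamma(p)=q_0$; the endomorphism $\gamma^{-1}\circ\tilde f\circ\gamma$ then lies in $W$ and has $p$ as a periodic point.

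\emph{Closing the cycle in case (a), and the role of tameness.} Here the perturbation must itself be an automorphism, so in place of the Oka principle we invoke the density property, which supplies automorphisms close to $\id$ with prescribed values on a finite set, realised as finite compositions of time maps of complete holomorphic vector fields. Choose $\alpha,\beta\in\Aut X$, as close to $\id$ as we wish on a compact set containing $L\cup f(L)$, with $\alpha(p)=q_0$, $\alpha(q_j)=q_j$ and $\beta(q_j)=q_j$ for $1\le j\le k-1$, and $\beta(q_k)=p$. Then $\tilde f:=\beta\circ f\circ\alpha\in\Aut X\cap W$, and $\tilde f^j(p)=q_j$ for $0\le j\le k-1$ while $\tilde f(q_{k-1})=p$, so $\tilde f^k(p)=p$. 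For this computation to be valid the approximate cycle must be robust in a specific sense --- in the phrasing of \cite{FS1997}, one needs ``there exists $r>0$ such that \ldots'', controlling the return behaviour of a whole neighbourhood of $p$ along the orbit segment. We are unable to reconstruct the justification of that assertion, and the tameness of $(p,f)$ --- a condition on how the forward and backward orbits near $p$ can accumulate --- is precisely the hypothesis under which we can secure it.

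\emph{Hyperbolicity.} Finally, given $\tilde f\in W$ with a periodic point $a$ of period $N$, the condition that the spectrum of $d(\tilde f^N)_a$ avoid the unit circle is open in $\tilde f$ and is met after one further small perturbation: compose with a near-identity automorphism (case (a)), or perform another Oka jet-interpolation (cases (b) and (c)), that fixes the whole cycle of $a$ and alters only the $1$-jet at $a$, by an amount chosen so that every multiplier of the cycle moves off the unit circle; this preserves the cycle, keeps the map in $W$, and makes $a$ hyperbolic. The main obstacle I anticipate is the one indicated above for case (a): extracting from the mere non-wandering hypothesis an approximate cycle whose interior behaves robustly enough for the near-identity corrections to close it, while closeness of $\beta\circ f\circ\alpha$ to $f$ is controlled only on the fixed compact $L$ although the orbit segment $q_1,\dots,q_{k-1}$ may range far outside $L$. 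This is the step of Forn\ae ss and Sibony we cannot follow, and the tameness condition is tailored to replace it; the subordinate difficulty, present already in cases (b) and (c), is globalisation --- turning the local modification of $f$ into a genuine holomorphic self-map, or automorphism, of $X$ lying in $W$ --- which is where the Oka property, and homogeneity in case (b), do the work.
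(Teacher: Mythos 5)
Your proposal misses the central mechanism of the proof and, as written, the key step fails. You try to close the approximate cycle by a surgery at the return point $q_{k-1}$ (resp.\ $q_k$), which in general lies \emph{inside} the compact set $L$ on which $\tilde f$ must stay $\varepsilon$-close to $f$. Two things go wrong there. First, the Oka principle with approximation requires the initial continuous map to be holomorphic on a neighbourhood of the compact set where one approximates; your $h$, equal to $f$ off a thin ball $N$ about $q_{k-1}$ and modified inside $N$, is not holomorphic near $L$ when $N$ meets $L$, so you cannot approximate it on $L$ at all. Second, even if you first seek a holomorphic near-identity correction $\phi$ with $\phi(q_k)=q_0$ and $\phi(q_j)=q_j$ for $1\le j\le k-1$, Cauchy estimates obstruct it: if some intermediate orbit point $q_j$ lies very close to $q_k$ (which the non-wandering hypothesis does nothing to prevent and which is not one of your ``degenerate configurations''), a holomorphic map that is $\varepsilon$-close to the identity on a fixed neighbourhood of both points cannot fix $q_j$ while displacing $q_k$ by the fixed amount $|q_0-q_k|$. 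This is precisely why Forn\ae ss and Sibony --- and the paper --- do \emph{not} argue this way.

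The paper's proof rests on a dichotomy that is entirely absent from your proposal. If $f$ is \emph{not} robustly non-expelling at $p$, then arbitrarily close to $(p,f)$ there is a pair $(q,g)$ whose orbit leaves any prescribed compact set $L$; the surgery is then performed at the escaping point, \emph{outside} $L$, where one may prescribe a large displacement while remaining the identity near $L$ (for automorphisms this uses Varolin's theorem, and tameness is used only to arrange that a \emph{single} $(q,g)$ escapes in both forward and backward time, so that the forward escape point can be sent to the backward one --- not to ``control the return behaviour of a neighbourhood of $p$'' as you suggest). If $f$ \emph{is} robustly non-expelling at $p$, no perturbation is performed: one shows $p$ is already periodic for $f$, via normal families on the Runge, invariant, relatively compact slice $U_f$, a holomorphic retraction onto a submanifold $\Sigma$, the compact abelian group generated by $f|_\Sigma$, a holomorphically convex totally real orbit $Gq$, and the contradiction of Lemma~\ref{l:contrad} applied to an $f$-invariant zero-free vector field approximated by $\mathscr S$-velocities (using the density or Oka property). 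Your proposal contains none of this Step~1 machinery, and for case (a) it explicitly defers the decisive step to an assertion you say you cannot justify; so the argument is not a proof of any of the three parts. Only the final hyperbolicity perturbation matches the paper (Lemma~\ref{l:perturbation}).
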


\begin{remark}
(a)  Let us comment on the 1-dimensional case.  No open Riemann surface has the density property.  The open Riemann surfaces that are Oka are $\C$ and $\C\setminus\{0\}$ and they are homogeneous.  The closing lemma that holds for endomorphisms of each of them seems quite nontrivial, although the proof given here can be simplified somewhat in the 1-dimensional case.

(b)  A Stein manifold with the density property is Oka and homogeneous.  The definitive reference on Oka theory is \cite{Forstneric2017}.  See also the survey \cite{FL2011}.  See \cite[Chapter 4]{Forstneric2017} or \cite{KK2011} for an overview of Anders\'en-Lempert theory, which is the theory of Stein manifolds with the density property or one of its variants.

The prototypical example of a Stein manifold with the density property is $\C^n$, $n\geq 2$.  If $X$ and $Y$ are Stein manifolds with the density property, then so are $X\times Y$, $X\times\C$, and $X\times\C^*$.  Most known examples of Stein manifolds with the density property are captured by the following theorem of Kaliman and Kutzschebauch \cite[Theorem 1.3]{KK2017}.  Let $X$ be a connected affine homogeneous space of a linear algebraic group.  If $X$ is not isomorphic to $\C$ or $\C^{*n}$ for some $n\geq 1$, then $X$ has the algebraic density property and therefore also the density property.

(c)  The manifold $\C^{*n}$, $n\geq 2$, is an important example of an Oka-Stein manifold for which it is not known whether or not it has the density property.  So we know that the closing lemma holds for endomorphisms of $\C^{*n}$, $n\geq 2$, but we do not know whether the tame closing lemma holds for automorphisms.  Automorphisms of $\C^{*n}$, $n\geq 2$, are indeed mysterious: for example, it is an open question whether they necessarily preserve the volume form $(z_1\cdots z_n)^{-1}dz_1\wedge\cdots\wedge dz_n$.  Algebraic automorphisms of $\C^{*n}$ do preserve it; therefore $\C^{*n}$ does not possess the algebraic density property.

(d)  Part (b) of Theorem \ref{t:closing} gives a closing lemma of sorts for automorphisms of a homogeneous Oka-Stein manifold $X$.  If $f$ is an automorphism of $X$ and $p$ is a non-wandering point of $f$, not necessarily tame, then there are endomorphisms of $X$, arbitrarily close to $f$, of which $p$ is a periodic point.  Such endomorphisms will be injective on arbitrarily large compact subsets of $X$.
\end{remark}

Two general density theorems follow rather easily from Theorem \ref{t:closing}.

\begin{corollary}   \label{c:generic-density}
{\rm (a)}  Hyperbolic periodic points are dense in the tame non-wandering set of a generic automorphism of a Stein manifold with the density property.

{\rm (b)}  Hyperbolic periodic points are dense in the non-wandering set of a generic endo\-morphism of an Oka-Stein manifold.
\end{corollary}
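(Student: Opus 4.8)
The plan is to handle parts (a) and (b) in parallel by a standard Baire category argument, applying Theorem \ref{t:closing}(a) (the tame closing lemma) in the first case, with $X$ a Stein manifold with the density property and $\mathscr E=\Aut X$, and Theorem \ref{t:closing}(c) (the weak closing lemma) in the second, with $X$ an Oka-Stein manifold and $\mathscr E$ the space of holomorphic endomorphisms of $X$. In both cases $\mathscr E$ is a Polish space: for $\Aut X$ this is recalled above, while $\O(X,X)$ is a closed subspace of the Polish space $C(X,X)$ of continuous self-maps of $X$ in the compact-open topology, since $X$, being Stein, is locally compact and $\sigma$-compact. Fix a countable base $\{U_n\}_{n\in\mathbb N}$ of nonempty open subsets of $X$, and for $f\in\mathscr E$ write $N(f)$ for the set in question: the tame non-wandering set of $f$ in case (a), the non-wandering set of $f$ in case (b). A periodic point is non-wandering, and by Proposition \ref{p:tame-periodic} a hyperbolic periodic point of an automorphism of a Stein manifold is tame; so in both cases the set of hyperbolic periodic points of $f$ is contained in $N(f)$, and therefore $f$ has its hyperbolic periodic points dense in $N(f)$ if and only if, for every $n$, either $U_n\cap N(f)=\varnothing$ or $U_n$ contains a hyperbolic periodic point of $f$.

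For each $n$, let $\mathscr D_n\subseteq\mathscr E$ be the set of $f$ that have a hyperbolic periodic point in $U_n$. The first step is to show that $\mathscr D_n$ is open. If $p\in U_n$ is a periodic point of $f$ of period $k$, hyperbolicity says $df^k_p$ has no eigenvalue on the unit circle; in particular $1$ is not an eigenvalue, so $df^k_p-\id$ is invertible. Working in local charts and using the Cauchy estimates to control derivatives under locally uniform convergence, the implicit function theorem then produces, for every $g\in\mathscr E$ close enough to $f$, a unique nearby fixed point of $g^k$; this point lies in $U_n$ and is again hyperbolic for $g$ once $g$ is sufficiently close, since the relevant differential varies continuously with $g$. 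Hence $g\in\mathscr D_n$, so $\mathscr D_n$ is open. The second step is to note that if $U_n\cap N(f)\neq\varnothing$ then $f\in\overline{\mathscr D_n}$: picking $p\in U_n\cap N(f)$, Theorem \ref{t:closing}(a) gives, arbitrarily close to $f$, an automorphism of which $p$ (hence a point of $U_n$) is a hyperbolic periodic point, while Theorem \ref{t:closing}(c), applied with the neighbourhood $U_n$ of $p$, gives, arbitrarily close to $f$, an endomorphism with a hyperbolic periodic point in $U_n$. Equivalently, every $f\in\mathscr E\setminus\overline{\mathscr D_n}$ has $U_n\cap N(f)=\varnothing$. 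It follows that $\mathscr G_n:=\mathscr D_n\cup(\mathscr E\setminus\overline{\mathscr D_n})$ equals $\mathscr E\setminus\partial\mathscr D_n$, which is open and dense because the frontier of an open set is nowhere dense, and every $f\in\mathscr G_n$ either has a hyperbolic periodic point in $U_n$ or satisfies $U_n\cap N(f)=\varnothing$.

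To finish, I would invoke the Baire category theorem: $\mathscr G:=\bigcap_n\mathscr G_n$ is then a dense $G_\delta$ subset of $\mathscr E$, and by the criterion in the first paragraph every $f\in\mathscr G$ has its hyperbolic periodic points dense in $N(f)$; as $\mathscr G$ is comeager, this establishes the corollary. I do not anticipate a genuine obstacle, since all the real content is packaged in Theorem \ref{t:closing}; the only point that calls for a little care is the openness of $\mathscr D_n$, that is, the persistence of a hyperbolic periodic point (together with its hyperbolicity) under perturbation, which is the routine application of the implicit function theorem made possible by the observation that hyperbolicity rules out $1$ as an eigenvalue of the differential of the iterate.
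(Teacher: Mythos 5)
Your proposal is correct and follows essentially the same route as the paper: the paper likewise sets $S_n=\{f:\ f$ has a hyperbolic cycle meeting $U_n\}$, takes the residual set $\bigcap_n(\mathscr S\setminus\partial S_n)$, and uses Theorem \ref{t:closing} (the weak version sufficing for endomorphisms) to show that $f\notin\overline{S_n}$ is impossible when $U_n$ meets the (tame) non-wandering set. Your extra justifications (openness of $\mathscr D_n$ via the implicit function theorem, and the appeal to Proposition \ref{p:tame-periodic}, which is in fact not needed for density) are harmless elaborations of points the paper leaves implicit.
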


As far as the authors know, Theorem \ref{t:closing} and Corollary \ref{c:generic-density} are the first results about holomorphic dynamics on Oka manifolds.

Next we strengthen the main result of our previous paper \cite[Theorem 1]{AL2019}.  There the following result was proved with $A$ equal to the whole group $\Aut_\omega X$ of automorphisms of $X$ that preserve $\omega$, under the hypothesis that $\omega$ is exact.  Clearly, $\Aut_\omega X$ is a closed subgroup of $\Aut\, X$.  The cohomology groups below are complex de Rham cohomology groups.

\begin{theorem}   \label{t:chaotic-generic}
Let $X$ be a Stein manifold of dimension $n\geq 2$ satisfying the volume density property with respect to a holomorphic volume form $\omega$ whose class in $H^n(X)$ lies in the cup product image of $H^1(X)\times H^{n-1}(X)$.  Let $A$ be a closed submonoid of $\Aut_\omega X$ such that:
\begin{itemize}
\item  $A$ contains all time maps of complete divergence-free vector fields on $X$.
\item  Every automorphism in $A$ acts as the identity on $H^{n-1}(X)$.
\end{itemize}
Then chaotic automorphisms are generic in $A$.
\end{theorem}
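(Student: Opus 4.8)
The plan is a Baire category argument in $A$. As a closed subspace of the Polish group $\Aut\, X$, the monoid $A$ is itself a Polish space, hence a Baire space. Fix a countable basis $(U_i)_{i\geq 1}$ of nonempty open subsets of $X$, and for each ordered pair $(i,j)$ let $A_{i,j}$ be the set of $g\in A$ that possess a hyperbolic periodic point $p$, of some period $N$, whose orbit $p,g(p),\dots,g^{N-1}(p)$ meets both $U_i$ and $U_j$. If $g$ lies in $\bigcap_{i,j}A_{i,j}$, then taking $i=j$ shows that the periodic points of $g$ are dense, while for any two nonempty open sets $U\supseteq U_i$ and $V\supseteq U_j$ the membership $g\in A_{i,j}$ provides a cycle of $g$ meeting both $U$ and $V$; by the equivalent description of chaos recalled in the introduction, $g$ is chaotic. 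Hence it suffices to show that each $A_{i,j}$ is open and dense in $A$: then $\bigcap_{i,j}A_{i,j}$ is a dense $G_\delta$ subset of $A$ consisting of chaotic automorphisms, and chaotic automorphisms are generic in $A$.

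Openness of $A_{i,j}$ is the standard persistence of hyperbolic periodic orbits. Suppose $g\in A_{i,j}$ has a witnessing orbit of period $N$ through a hyperbolic periodic point $p$. If $g'\in A$ is sufficiently close to $g$ in the compact-open topology, then by the Cauchy estimates $g'$ is $C^1$-close to $g$ on a fixed compact neighbourhood of the orbit, so $(g')^N$ is $C^1$-close to $g^N$ near $p$; since $1$ is not an eigenvalue of $D(g^N)(p)$, the implicit function theorem yields a fixed point $p'$ of $(g')^N$ near $p$, depending continuously on $g'$, at which the derivative of $(g')^N$ is hyperbolic. The first $N$ points of the $g'$-orbit of $p'$ lie near those of $p$ and hence still meet $U_i$ and $U_j$, so $g'\in A_{i,j}$.

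Density of $A_{i,j}$ is the heart of the proof and is carried out by the method of Forn\ae ss and Sibony. Given $g\in A$, a compact set $K\subseteq X$, and $\varepsilon>0$, we must produce $g'\in A_{i,j}$ with $g'$ within $\varepsilon$ of $g$ on $K$. The volume density property is the engine: the closure $\Aut_\omega^{*}X$ of the group generated by time maps of complete divergence-free vector fields on $X$ is contained in $A$, because $A$ is a closed submonoid containing all such time maps together with their inverses; moreover $\Aut_\omega^{*}X$ acts transitively on $X$ and supports Anders\'en-Lempert approximation of volume-preserving local isotopies. Using these facts one follows the $g$-orbit of a suitably chosen point $p_0\in U_i$ until it leaves the region where $g'$ is forced to stay close to $g$, then routes the continuation freely through a point of $U_j$ and back toward $p_0$, and finally closes up the resulting orbit skeleton into a hyperbolic periodic orbit, exactly as in the closing lemma (Theorem~\ref{t:closing}); at each stage the required perturbation is realised, via the Anders\'en-Lempert theorem with the volume constraint, as composition with an automorphism from $\Aut_\omega^{*}X\subseteq A$ that is close to the identity off a large compact set. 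The resulting $g'\in A$ is $\varepsilon$-close to $g$ on $K$ and lies in $A_{i,j}$.

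I expect the main obstacle to be keeping the perturbations inside $A$ when $\omega$ is not exact — the only point that goes beyond \cite[Theorem 1]{AL2019}, which is proved for exact $\omega$ with $A=\Aut_\omega X$. Realising a prescribed finite jet datum by an element of $\Aut_\omega^{*}X$ requires controlling the flux, in $H^{n-1}(X)$, of the interpolating volume-preserving isotopy. The hypothesis that the class of $\omega$ lies in the cup-product image of $H^1(X)\times H^{n-1}(X)$ is precisely what makes enough complete divergence-free vector fields available for this flux to be absorbed, and the hypothesis that every automorphism in $A$ acts as the identity on $H^{n-1}(X)$ is the compatible restriction that allows $g'$ to remain in $A$ — note that the automorphisms from $\Aut_\omega^{*}X$ used above, being limits of compositions of flows, act trivially on all of $H^\ast(X)$ automatically. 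Once this cohomological bookkeeping is in place, the general-position choices, the jet interpolation producing hyperbolicity, and the persistence argument all follow the established arguments of Forn\ae ss and Sibony.
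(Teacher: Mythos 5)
Your overall Baire scheme (open--dense sets $A_{i,j}$ of automorphisms with a hyperbolic cycle meeting $U_i$ and $U_j$, plus persistence for openness) matches the final step of the paper's argument, which is delegated to \cite[Section 5]{AL2019}. But the density of $A_{i,j}$ --- which you rightly call the heart of the proof --- has a genuine gap. Your routing argument ``follow the $g$-orbit of $p_0\in U_i$ until it leaves the region where $g'$ is forced to stay close to $g$, then route it through $U_j$ and close it up'' presupposes that the orbit \emph{does} leave that region. This is exactly the ``not robustly non-expelling'' case (Step 2 of the proof of Theorem \ref{t:closing}). If $g$ is robustly non-expelling --- there exist a neighbourhood $W$ of $g$ in $A$, a nonempty open $V\subset X$, and a compact $K$ with $(g')^j(V)\subset K$ for all $g'\in W$ and $j\geq 0$ --- then no admissible perturbation ever frees the orbit of a point of $V$ from $K$, and your Anders\'en--Lempert rerouting cannot be performed. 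The bulk of the paper's proof exists precisely to exclude this case: one first proves that \emph{expelling} automorphisms are generic in $A$, by deriving a contradiction from a robustly non-expelling $f$ via the compact abelian group $G=\overline{\langle f\rangle}\subset\Aut\,\Omega$, a holomorphically convex totally real orbit $Gq$, an $f$-invariant zero-free divergence-free vector field $\xi$ near $Gq$, and Lemma \ref{l:contrad}. Without this input, the density of $A_{i,j}$ is not established.

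Relatedly, you misplace where the two cohomological hypotheses act. They are not needed to absorb flux in a jet-interpolation step (the perturbation realising a prescribed jet, \cite[Theorem 6]{AL2019}, is a composition of time maps of complete divergence-free fields and needs no such hypothesis). Rather: the hypothesis that $[\omega]$ lies in the cup product image of $H^1(X)\times H^{n-1}(X)$ is used, in the case $\dim Gq=n$, to write $\omega=\alpha\wedge\beta+d\gamma$ and produce a non-exact class $b=[\beta|_{Gq}]\in H^{n-1}(Gq)$ coming from $H^{n-1}(X)$, which yields a zero-free invariant $\xi$ with $[\xi\rfloor\omega]=b$ and, crucially, allows $\xi$ to be approximated on $Gq$ by \emph{globally defined} divergence-free fields (hence by $A$-velocities); the hypothesis that every element of $A$ acts as the identity on $H^{n-1}(X)$ is what forces $f_*b=b$ and hence the $f$-invariance of $\xi$. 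These are the steps where your proposal would have to do real work, and they are absent.
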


The largest that $A$ can be is the group of all elements of $\Aut_\omega X$ that act as the identity on $H^{n-1}(X)$ (this group is closed by Poincar\'e duality).  For $X=\C^{*n}$ with the usual Haar form, this largest $A$ is the kernel of an epimorphism $\Aut_\omega X \to \mathrm{SL}(n,\mathbb Z)$.  That the morphism is surjective is shown by the automorphisms
\[ (z_1,\ldots,z_n) \mapsto \big(z_1^{a_{11}}\cdots z_n^{a_{1n}},\ldots, z_1^{a_{n1}}\cdots z_n^{a_{nn}}\big), \]
where $(a_{ij})\in \mathrm{SL}(n,\mathbb Z)$.

The smallest that $A$ can be is the closure of the subgroup of finite compositions of time maps of complete divergence-free vector fields on $X$.  The following corollary is therefore immediate.

\begin{corollary}   \label{c:chaotic-exists}
Let $X$ be a Stein manifold of dimension $n\geq 2$ satisfying the volume density property with respect to a holomorphic volume form whose class in $H^n(X)$ lies in the cup product image of $H^1(X)\times H^{n-1}(X)$.  Then $X$ has a chaotic volume-preserving automorphism that can be approximated by finite compositions of time maps of complete divergence-free vector fields on $X$.  
\end{corollary}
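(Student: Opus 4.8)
The plan is to apply Theorem \ref{t:chaotic-generic} with the monoid $A$ taken as small as that theorem permits, namely the closure of the group generated by the relevant time maps. Fix a holomorphic volume form $\omega$ on $X$ whose class in $H^n(X)$ lies in the cup product image of $H^1(X)\times H^{n-1}(X)$, let $G\subseteq\Aut_\omega X$ denote the subgroup of finite compositions of time maps of complete divergence-free vector fields on $X$, and set $A=\overline G$, the closure of $G$ in $\Aut X$. The corollary will follow once $A$ is shown to satisfy the hypotheses of Theorem \ref{t:chaotic-generic}.

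First I would verify those hypotheses. Since $\Aut_\omega X$ is closed in $\Aut X$ and $G\subseteq\Aut_\omega X$ (the flow of an $\omega$-divergence-free field preserves $\omega$), we have $A\subseteq\Aut_\omega X$; and the closure of a subgroup of a topological group is again a subgroup, so $A$ is in particular a closed submonoid of $\Aut_\omega X$. By construction $A\supseteq G$ contains every time map of a complete divergence-free vector field. The one point that needs an argument is that every element of $A$ acts as the identity on $H^{n-1}(X)$: a time map $\phi^t$, $t\in\C$, of a complete vector field lies on the continuous path $s\mapsto\phi^s$ in $\Aut X$ issuing from $\id_X$, so it is isotopic to the identity and hence induces the identity on de Rham cohomology; therefore so does every finite composition of such maps, that is, every element of $G$. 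Because $C^0$-close automorphisms of a manifold are isotopic, the homomorphism $\Aut X\to\mathrm{GL}(H^{n-1}(X))$ is locally constant, so the automorphisms inducing the identity on $H^{n-1}(X)$ form a closed subset of $\Aut X$, which therefore contains $A=\overline G$.

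With these checks in place, Theorem \ref{t:chaotic-generic} applies and shows that the chaotic automorphisms are generic in $A$; since $A$ is a nonempty closed subset of the Polish group $\Aut X$, hence itself Polish and a Baire space, a generic set is dense and in particular nonempty, so $A$ contains a chaotic automorphism. By the very definition of $A=\overline G$, every element of $A$ is a limit of finite compositions of time maps of complete divergence-free vector fields on $X$, so a chaotic element of $A$ is exactly the automorphism asserted in the corollary. I do not expect a genuine obstacle: the corollary is essentially a specialisation of Theorem \ref{t:chaotic-generic}, and the only mild verification is the homotopy-invariance argument confirming that $A$ is an admissible choice for the monoid in that theorem.
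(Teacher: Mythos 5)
Your proposal is correct and follows exactly the route the paper takes: the paper notes that the smallest admissible $A$ in Theorem \ref{t:chaotic-generic} is the closure of the subgroup of finite compositions of time maps of complete divergence-free vector fields and declares the corollary immediate. Your extra verification that this $A$ acts trivially on $H^{n-1}(X)$ (time maps are isotopic to the identity, and the property passes to the closure) is the detail the paper leaves implicit, and it is carried out correctly.
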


As explained in more detail in \cite{AL2019}, among the Stein manifolds that satisfy the hypotheses of Theorem \ref{t:chaotic-generic} are the following.
\begin{itemize}
\item  Any connected linear algebraic group of dimension at least 2 that is not semisimple, for example $\C^n$ and $\C^{*n}$, $n\geq 2$, with respect to a left- or right-invariant Haar form.  (Theorem \ref{t:chaotic-generic} does not cover any semisimple groups.)
\item  $Y\times\C$ and $Y\times\C^*$, where $Y$ is any Stein manifold with the volume density property.  (A product manifold is always endowed with the product volume form and $\C$ and $\C^*$ carry the standard volume forms $dz$ and $dz/z$, respectively.)
\end{itemize}

Leuenberger \cite{Leuenberger2016} produced new examples of algebraic hypersurfaces in affine space with the density property and the volume density property, including the famous Koras-Russell cubic
\[ C=\{ (x,y,z,w)\in\C^4 : x^2y+x+z^2+w^3=0 \} \]
with the volume form $x^{-2}dx\wedge dz\wedge dw$.  It is known that $C$ is diffeomorphic to $\mathbb R^6$, but not algebraically isomorphic to $\C^3$ (in fact, the algebraic automorphism group does not act transitively on $C$).  Whether $C$ is biholomorphic to $\C^3$ is an open question.  

The Koras-Russell cubic satisfies the tame closing lemma for automorphisms, the closing lemma for endomorphisms, both general density theorems, Theorem \ref{t:chaotic-generic}, and Corollary \ref{c:chaotic-exists}.

\begin{remark}
Recall that the set of periodic points of a chaotic automorphism is dense.  Hence, in the setting of Theorem \ref{t:chaotic-generic}, a closing lemma and a general density theorem can be easily derived.
\end{remark}

Following the proof of Theorem \ref{t:chaotic-generic}, we state a proposition, similar to \cite[Theorem 7]{AL2019}, describing some very particular consequences of the failure of genericity of chaos in the absence of the cohomological hypothesis in Theorem \ref{t:chaotic-generic}, namely the existence of a robustly non-expelling automorphism with a very special orbit.  We end the paper by proving that time maps of global flows are not robustly non-expelling.

\section{Proofs} 
\label{sec:proofs}

\noindent
This section contains the proofs of Theorems \ref{t:closing} and \ref{t:chaotic-generic} and Corollary \ref{c:generic-density}, and, first, Propositions \ref{p:tame-generic} and \ref{p:tame-periodic}.

\begin{proof}[Proof of Proposition \ref{p:tame-generic}]
(a)  Let $K_1\subset K_2\subset\cdots$ be compact sets exhausting $X$, with $K_n\subset\overset\circ K_{n+1}$ for each $n$.  The set of tame pairs in  $X\times\Aut\, X$ contains the set $\Gamma$ of pairs $(p,f)$ such that for each $n$,
\[(p,f)\in (T_{K_n}^+\cup T_{K_n}^-)^\circ \textrm{ implies } (p,f)\in \overset\circ {T_{K_n}^+} \cup \overset\circ {T_{K_n}^-}.\]
We have 
\[\Gamma=\bigcap_n(\overset\circ {T_{K_n}^+} \cup \overset\circ {T_{K_n}^-})\cup ((T_{K_n}^+\cup T_{K_n}^-)^\circ)^\complement,\]
where $\complement$ denotes the complement in $X\times\Aut\, X$, so 
\[\Gamma^\complement=\bigcup_n (\overset\circ {T_{K_n}^+} \cup \overset\circ {T_{K_n}^-})^\complement \cap (T_{K_n}^+\cup T_{K_n}^-)^\circ.\]
For each $n$, the set $(\overset\circ {T_{K_n}^+} \cup \overset\circ {T_{K_n}^-})^\complement \cap (T_{K_n}^+\cup T_{K_n}^-)^\circ$ is contained in $\partial T_{K_n}^+ \cup \partial T_{K_n}^-$.  Since $T_{K_n^+}$ is closed, $\partial T_{K_n}^+$ is closed with empty interior; so is $\partial T_{K_n}^-$.  Hence, by Baire, $\Gamma^\complement$ is contained in an $F_\sigma$ set with empty interior.

(b) is an easy consequence of (a).
\end{proof}

\begin{proof}[Proof of Proposition \ref{p:tame-periodic}]
Let $p$ be a hyperbolic periodic point of an automorphism $f$ of a Stein manifold $X$.  First assume that $p$ is attracting (the case of a repelling periodic point is analogous).  Let $K$ be a compact subset of $X$ containing the $f$-orbit of $p$ in its interior.  The persistence of attracting periodic points implies that for all $(x,g)$ sufficiently close to $(p,f)$, we have $g^j(x)\in K$ for all $j\geq 0$, that is, $(p,f)\in \overset\circ {T_K^+}$.

Now assume that $p$ is a saddle point.  We claim that for every compact subset $K$ of $X$, we have $(p,f)\notin (T_K^+\cup T_K^-)^\circ$, because arbitrarily close to $p$, there are points $x$ with $f^j(x)\notin K$ for some $j<0$ and some $j>0$.  We may assume that $p$ is a fixed point of $f$.  The stable and unstable manifolds $W^s$ and $W^u$ of $p$ are immersed $\C^s$ and $\C^u$, respectively, so neither is contained in $K$.  Take transverse polydiscs $D^u$ to $W^s$ and $D^s$ to $W^u$ outside $K$.  By the lambda lemma \cite[Lemma 2.7.1 and Remark 3, p.~85]{PdM1982}, there are points in $D^u$ that, under iteration by $f$, come arbitrarily close to $p$ and subsequently get mapped into $D^s$.
\end{proof}

Next comes a perturbation lemma that will be used in the proof of Theorem \ref{t:closing}.

\begin{lemma}  \label{l:perturbation}
Let $X$ be a Stein manifold with the density property or with the Oka property, and let $\mathscr S$ be the space of automorphisms or endomorphisms of $X$, respectively.  Let $x_1,\ldots,x_m$ be distinct points in $X$.  For every neighbourhood $W$ of $\id_X$ in $\mathscr S$, there is a neighbourhood $V$ of the identity in the group of linear automorphisms of $T_{x_1} X$, such that for every $\lambda\in V$, there is $h\in W$ such that:
\begin{enumerate}
\item  $h(x_j)=x_j$ for $j=1,\ldots,m$.
\item  $d_{x_j}h=\id$ for $j=2,\ldots,m$.
\item  $d_{x_1}h=\lambda$.
\end{enumerate}
\end{lemma}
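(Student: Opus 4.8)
The plan is to first produce a holomorphic vector field $Z$ on $X$ that vanishes at every $x_j$, has trivial linear part at $x_2,\ldots,x_m$, and has prescribed linear part at $x_1$; then use the density property (resp.\ Oka property) to approximate its time-$1$ map (resp.\ a suitable nearby endomorphism) by an element $h$ of $\mathscr S$ lying in the given neighbourhood $W$, and finally correct the finitely many jet conditions exactly. Concretely, for a fixed endomorphism $\Lambda$ of $T_{x_1}X$ viewed as an element of $\mathfrak{sl}$ or $\mathfrak{gl}$, I would use the Cartan extension theorem on the Stein manifold $X$ to find a holomorphic vector field $Z_\Lambda$ with $Z_\Lambda(x_j)=0$ for all $j$, $d_{x_1}(\text{flow of }Z_\Lambda)=\exp(\Lambda)$ once integrated, and vanishing $1$-jet at $x_2,\ldots,x_m$; the dependence on $\Lambda$ can be taken linear, hence the family $\{Z_\Lambda\}$ is finite-dimensional and depends holomorphically on $\Lambda$ in a neighbourhood of $0$.

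Next I would pass from vector fields to automorphisms. In the density-property case, the time-$1$ map $\phi^1_{Z_\Lambda}$ need not be globally defined, but by Anders\'en--Lempert theory (see \cite[Chapter 4]{Forstneric2017}) its local flow near the finite set $\{x_1,\ldots,x_m\}$ can be approximated, uniformly on a large compact set, by a global automorphism; shrinking $\Lambda$ keeps this automorphism inside $W$, since $\phi^1_{Z_0}=\id_X$ and everything depends continuously on $\Lambda$. In the Oka case one argues similarly, approximating a local biholomorphism fixing the $x_j$ with the right jets by a global endomorphism via the Oka principle with jet interpolation. Call the resulting map $g_\Lambda\in W$; it satisfies (1)--(3) up to a small error in the jets at the $x_j$.

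The remaining step is to upgrade the approximate jet conditions to exact ones. Here I would invoke the interpolation form of the density/Oka property: the conditions (1)--(3) are finitely many jet conditions at finitely many points, cutting out a finite-codimension affine constraint, and both the density property and the Oka property come with versions allowing one to prescribe finite jets exactly while staying close to a given map (for Stein manifolds with the density property this is standard Anders\'en--Lempert with interpolation; for Oka manifolds it is the jet-interpolation property of Oka maps). Applying this to $g_\Lambda$ produces $h\in W$ with $h(x_j)=x_j$, $d_{x_j}h=\id$ for $j\ge 2$, and $d_{x_1}h$ exactly equal to the prescribed value; letting $\lambda$ range over $\{\exp(\Lambda):\Lambda \text{ near }0\}$, which is a neighbourhood $V$ of $\id$ in the linear automorphism group of $T_{x_1}X$, completes the proof. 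The main obstacle I expect is bookkeeping the compatibility between the approximation and the exact jet interpolation so that $h$ stays in the prescribed neighbourhood $W$; this is where the continuity of the whole construction in $\Lambda$, together with the fact that $\Lambda=0$ gives the identity, is essential.
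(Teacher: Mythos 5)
Your proposal is correct in outline and, in the automorphism case, follows essentially the same route as the paper, which simply defers to the proof of Theorem 6 of \cite{AL2019}: there too one builds a holomorphic vector field with prescribed $1$-jets at the $x_j$ (Cartan) and globalises its time-one map by Anders\'en--Lempert approximation with jet interpolation, keeping everything in $W$ by shrinking $\Lambda$. In the endomorphism case your route differs from the paper's: you work intrinsically with the local flow of $Z_\Lambda$ on $X$, whereas the paper embeds $X$ in $\C^N$, writes down the desired map explicitly there (where the lemma is evident), pulls it back to $X$ via a holomorphic retraction $\rho$ from a tubular neighbourhood, and only then applies the basic Oka property with approximation and jet interpolation. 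The two constructions buy the same thing; yours avoids the embedding, the paper's avoids integrating a vector field.

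The one step you gloss over, and which the paper treats with some care, is that the Oka principle with approximation and jet interpolation takes as input a \emph{globally defined continuous} map $X\to X$ that is holomorphic near the compact set where you approximate and at the interpolation points. Your $\phi^1_{Z_\Lambda}$ (like the paper's $\rho\circ f_\lambda$) is only defined on a neighbourhood $L$ of the relevant compact set, so you must first extend it to a continuous self-map of $X$. The paper does this by noting that, for $\Lambda$ small, the map is close enough to the identity on $L$ to be homotopic to $\id_L$, and then extending the homotopy using a CW structure in which $L$ is a subcomplex. You should insert the same argument; without it the Oka-principle step does not literally apply. A second, smaller point: rather than approximating first and then ``correcting'' the jets, it is cleaner to feed the exact jet conditions of $\phi^1_{Z_\Lambda}$ into the interpolation part of the Oka principle (or of the Anders\'en--Lempert theorem) in one pass, which is how both references state these results; your two-step version can be made to work but requires you to check that the correcting map can itself be taken close to the identity, which is an extra (if routine) argument.
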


\begin{proof}
First consider the endomorphism case.  Embed $X$ as a closed submanifold of $\C^N$ and let $U$ be a tubular neighbourhood of $X$ with a holomorphic retraction $\rho$ onto $X$.  Let $W$ be a neighbourhood of $\id_X$ in $\End\, X$.  (As usual, the monoid $\End\, X$ of holomorphic endomorphisms of $X$ carries the compact-open topology, which is defined by a complete metric.)  Find $\epsilon>0$ and a holomorphically convex compact subset $K$ of $X$ containing $x_1,\ldots,x_m$, such that an endomorphism of $X$ that is within $\epsilon$ of the identity on $K$ lies in $W$ (with respect to, say, the Euclidean metric).  Let $L$ be a compact subset of $X$ containing $K$ in its interior.

It is evident that the lemma holds with $\C^N$ in place of $X$, so given a neighbourhood $W'$ of $\id_{\C^N}$ in $\End\,\C^N$, find a neighbourhood $V'$ of the identity in the group of linear automorphisms of $T_{x_1}\C^N$, such that for every $\lambda\in V'$, there is $f_\lambda\in W'$ satisfying (1--3).  By shrinking $W'$, we may assume that all $f\in W'$ satisfy the following properties.
\begin{itemize}
\item $f(L)\subset U$ and $\rho\circ f$ is within $\epsilon/2$ of the identity on $K$.
\item $\rho\circ f$ and the identity are close enough on $L$ that they are homotopic as maps $L\to X$, so $\rho\circ f$ extends to a continuous map $X\to X$.  (For this, $L$ needs to be well chosen, say as a subcomplex of $X$ with respect to a CW structure on $X$.  Then any continuous map $(L \times [0,1]) \cup (X \times \{0\}) \to X$ extends to a continuous map $X \times [0,1]\to X$.)
\end{itemize}
Consider the neighbourhood $V$ of the identity in the group of linear automorphisms of $T_{x_1} X$ consisting of those automorphisms that extend to an automorphism of $T_{x_1}\C^N$ in $V'$.  Take $\lambda\in V$ and extend it to an automorphism, also denoted $\lambda$, in $V'$.  Let $f_\lambda\in W'$ be as above and let $g_\lambda:X\to X$ be continuous with $g_\lambda=\rho\circ f_\lambda$ on $L$.  Then $g_\lambda$ satisfies (1--3).  Applying the Oka property formulated as the basic Oka property with approximation and jet interpolation, we can deform $g_\lambda$ to a holomorphic map $h_\lambda:X\to X$, still satisfying (1--3), and within $\epsilon/2$ of $g_\lambda$ on $K$.  Thus $h_\lambda$ is within $\epsilon$ of the identity on $K$, so $h_\lambda\in W$.

In the automorphism case, the lemma can be proved in the same way as \cite[Theorem 6]{AL2019}, ignoring preservation of volume.
\end{proof}

Our next lemma isolates the contradiction, due to Forn\ae ss and Sibony, from the end of the proof of \cite[Theorem 5]{AL2019}.  It is used in the proofs of Theorems \ref{t:closing} and \ref{t:chaotic-generic}.  We start with two definitions.

Let $X$ be a complex manifold and let $A$ be a submonoid of $\End\, X$.  We call a holomorphic vector field $\eta$ on $X$ an {\em $A$-velocity} if there is a holomorphic map $\Psi : \C\times X\to X$ such that:
\begin{itemize}
\item $\Psi_t=\Psi(t,\cdot)\in A$ for all $t\in \C$.
\item $\Psi_0= {\rm id}_X$.
\item $\dfrac{\partial}{\partial t}\Psi\bigg\vert_{t=0}=\eta.$
\end{itemize}
We say that $f\in A$ is {\em robustly non-expelling} in $A$ if there is a neighbourhood $W$ of $f$ in $A$, a nonempty open set  $V\subset X$, and a compact subset $K\subset X$ such that $g^j(V)\subset K$ for all $g\in W$ and $j\geq 0$.  Then the closed set 
\[ T_K^+ = \{(x,g)\in X\times A :  g^j(x)\in K \textrm{ for all }  j\geq 0 \} \]
has nonempty interior $U_K$, and the slice $U_{K,f}=\{x\in X : (x,f)\in U_K\}$ is a nonempty, open, relatively compact, forward $f$-invariant subset of $X$.

\begin{lemma}   \label{l:contrad}
Let $X$ be a complex manifold and $A$ be a submonoid of $\End\, X$.  Let $f$ be robustly non-expelling in $A$ and let $H\subset U_{K, f}$ be a nonempty, forward $f$-invariant, compact set. Then there does not exist a continuous, zero-free, $f$-invariant vector field on $H$ which is approximable by $A$-velocities uniformly on $H$.
\end{lemma}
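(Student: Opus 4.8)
I would argue by contradiction, using the invariant vector field to perturb $f$ so as to expel a point of $H$ from $K$. Suppose $\xi$ is a continuous, zero-free, $f$-invariant vector field on $H$ that is approximable by $A$-velocities uniformly on $H$. Since $H\subset U_{K,f}$, we have $(x,f)\in U_K=(T_K^+)^\circ$ for every $x\in H$; as $H$ is compact, the tube lemma yields an open neighbourhood $W_0$ of $f$ in $A$ with $H\times W_0\subset T_K^+$, that is,
\[ g^j(x)\in K \quad\text{for all } x\in H,\ g\in W_0,\ j\ge 0. \]
Note also that $H\subset U_{K,f}\subset K$ and $f^j(x)\in H$ for all $x\in H$ and $j\ge 0$. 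Finally, multiplying $\xi$ by a large positive constant — which preserves being an $A$-velocity (hence approximability), zero-freeness, and $f$-invariance — I may assume $\|\xi\|\ge 1$ on $H$ in the metric induced by a closed embedding $X\hookrightarrow\C^N$.

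Next I would fix an $A$-velocity $\eta$, with holomorphic map $\Psi\colon\C\times X\to X$ satisfying $\Psi_t\in A$, $\Psi_0=\id$, $\partial_t\Psi|_{t=0}=\eta$, chosen with $\|\eta-\xi\|$ as small as needed uniformly on $H$, and for small real $t>0$ put $g_t=\Psi_t\circ f\in A$. Since $\Psi_t\to\id$ locally uniformly as $t\to 0^+$, we have $g_t\in W_0$ once $t$ is small. The mechanism is the accumulation of drift: $f$-invariance of $\xi$ gives $d_xf^{\,k}(\xi(x))=\xi(f^k(x))$ for $x\in H$ and $k\ge 0$, so the first-order displacements ($\approx t\,\xi$) produced at successive steps of the $g_t$-orbit are carried forward by $df$ in such a way that they reinforce rather than cancel, yielding, for $x\in H$,
\[ g_t^{\,j}(x)=f^j(x)+jt\,\xi(f^j(x))+(\text{error}), \]
where the linear term has norm at least $jt$. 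Choosing $j$ with $jt>\operatorname{diam} K$ then forces $g_t^{\,j}(x)\notin K$ (as $f^j(x)\in H\subset K$), contradicting $g_t\in W_0$.

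The crux, and what I expect to be the main obstacle, is making the last display rigorous: over the roughly $t^{-1}\operatorname{diam} K$ iterations needed before the orbit leaves $K$, the quadratic Taylor remainders of $f$ and of $\Psi_t$ and the approximation error $\|\eta-\xi\|$ must be shown not to swamp the linear drift $jt\,\xi$. I would control this by taking $\|\eta-\xi\|$ sufficiently small relative to $t$ and by a careful analysis of the derivative cocycle of $f$ along $H$, using in particular that the existence of the zero-free invariant field $\xi$ rules out uniform contraction of $f$ on $H$, so that the accumulated drift is not damped as the iterations proceed. This is precisely the point where the method of Forn\ae ss and Sibony does the real work.
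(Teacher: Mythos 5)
Your setup and your heuristic are exactly right: you perturb $f$ to $g_t=\Psi_t\circ f$ with $\eta$ an $A$-velocity close to $\xi$ on $H$, you note that for small $t$ all $g_t$-orbits of points of $H$ stay in $K$, and you correctly identify that $f$-invariance of $\xi$ makes the first-order displacements reinforce rather than cancel. But the step you flag as ``the crux'' is a genuine gap, and the route you propose for closing it is the wrong one. Following the orbit $g_t^{\,j}(x)$ for a \emph{fixed} small $t>0$ until it escapes $K$ requires $j\sim t^{-1}\operatorname{diam}K$ iterations, and over that many iterations the second-order errors compound under the dynamics (generically like $e^{Lj}t^2\sim e^{L/t}t^2$), so they cannot be beaten by shrinking $\|\eta-\xi\|$ or $t$. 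Also, your appeal to ``no uniform contraction'' points at the wrong quantity: what the argument needs is an upper bound $M=\sup_{x\in H,\,j\ge0}\|d_xf^j\|<\infty$ (bounded \emph{expansion}, which follows from the orbits near $H$ staying in $K$), so that the error terms $d f^{\,i}(\eta-\xi)$ stay small; the invariance $df^{\,i}(\xi)=\xi\circ f^{\,i}$ already guarantees the drift itself is never damped.

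The missing idea is that one never tracks the orbit for fixed $t$ at all: one differentiates \emph{at} $t=0$ and uses holomorphy in the complex time variable. For each $j$, the map $t\mapsto(\Psi_t\circ f)^j(x)$ is holomorphic on $\{|t|<\delta\}$ with values in the fixed compact set $K$, so Cauchy estimates give a bound
\[
\Bigl\lVert\tfrac{\partial}{\partial t}(\Psi_t\circ f)^j(x)\big\vert_{t=0}\Bigr\rVert\le C
\]
with $C$ \emph{independent of $j$}. On the other hand, the chain rule writes this derivative as $\sum_{i=0}^{j-1}d_{f^{j-i}(x)}f^{\,i}\bigl(\eta(f^{j-i}(x))\bigr)$, and choosing $\|\eta-\xi\|\le c/(2M)$ on $H$ (where $c=\min_H\|\xi\|>0$) makes each summand lie within $c/2$ of $\xi(f^j(x))$ by invariance; hence the sum has norm at least $cj/2\to\infty$, a contradiction. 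No control of quadratic remainders is needed, because the only input is the uniform bound on the holomorphic family for $|t|<\delta$. This Cauchy-estimate step is precisely where Forn\ae ss and Sibony's method ``does the real work,'' and it is absent from your proposal.
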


By a vector field on $H$ we mean a section of the tangent bundle of $X$ over $H$.  We say that such a vector field $\xi$ is $f$-invariant if $\xi(f(x))=d_x f(\xi(x))$ for all $x\in H$.

\begin{proof}
We argue by contradiction.  Let $\xi$ be a continuous, zero-free, $f$-invariant vector field on $H$ which is uniformly approximable on $H$ by $A$-velocities.  Let $\lVert\cdot\rVert$ be a hermitian metric on $X$.  Let 
\[ M=\sup\limits_{x\in H, \, j\geq 0}\lVert d_x f^j\rVert, \] 
which is finite since the images of the maps $f^j$ near $H$ are contained in the compact subset $K$ of $X$, and let 
\[ c=\min_{x\in H}\ \lVert \xi(x)\rVert >0. \]
Let $\eta$ be an $A$-velocity such that $\lVert\xi-\eta\rVert\leq \dfrac{c}{2M}$ on $H$.  Let $\Psi : \C\times X\to X$ be associated to $\eta$ as above.  

Since $H\times\{f\}\subset U_K$, there is a neighbourhood $W$ of $f$ in $A$ such that $H\times W\subset U_K$.  Since $\Psi_t\to \id_X$ as $t\to 0$, there is $\delta>0$ such that $\Psi_t\circ f\in W$ when $\lvert t\rvert<\delta$.  Hence 
\[ (\Psi_t\circ f)^j(x)\in K \quad \textrm{when } \lvert t\rvert<\delta, \  x\in H, \  j\geq 0. \]
Cauchy estimates show that there is a constant $C$ such that for all $x\in H$ and $j\geq 0$,
\[ \bigg\lVert\frac{\partial}{\partial t}(\Psi_t\circ f)^j(x)\bigg\vert_{t=0}\bigg\rVert\leq C. \]
By the chain rule, the derivative $\dfrac\partial{\partial t}(\Psi_t\circ f)^j(x)\bigg\vert_{t=0}$ is
\[ \eta(f^j(x))+d_{f^{j-1}(x)}f(\eta (f^{j-1}(x)))+d_{f^{j-2}(x)}f^2(\eta (f^{j-2}(x)))+\cdots. \]
For $x\in H$ and $i=0,\ldots,j-1$,
\[ \lVert d_{f^{j-i}(x)}f^i\big(\eta (f^{j-i}(x))\big)-\xi (f^j(x))\rVert=\lVert d_{f^{j-i}(x)}f^i\big(\eta (f^{j-i}(x))-\xi(f^{j-i}(x))\big)\rVert \leq\frac c 2. \]
Thus we obtain a contradiction.  Namely, for $x\in H$, the derivative $\dfrac\partial{\partial t}(\Psi_t\circ f)^j(x)\bigg\vert_{t=0}$ is bounded as $j\to\infty$, but is also within $\dfrac c 2 j$ of $j\xi(f^j(x))$, whose norm is at least $cj$.
\end{proof}

\begin{proof}[Proof of Theorem \ref{t:closing}]
We largely follow the proofs of Theorems 4.1 and 5.1 in \cite{FS1997}.  Let $X$ be a Stein manifold with the density property or with the Oka property, and let $\mathscr S$ be the space of automorphisms or endomorphisms of $X$, respectively.  Let $p\in X$ be a non-wandering point of $f\in\mathscr S$ that, in the automorphism case, is also tame for $f$.  The proof is divided into two steps.  Before we start, let us dispose of the last claim in Theorem \ref{t:closing} by noting that a periodic point of a morphism can be made hyperbolic by an arbitrarily small perturbation of the morphism using Lemma \ref{l:perturbation}.

\smallskip\noindent
{\bf Step 1.}  Assume that $f$ is robustly non-expelling at $p$, meaning that there is a neighbourhood $W$ of $f$ in $\mathscr S$, a neighbourhood $V$ of $p$ in $X$, and a compact subset $K$ of $X$ such that $g^j(V)\subset K$ for all $g\in W$ and $j\geq 0$.  By replacing $K$ by a larger set, we can assume that it is holomorphically convex.  We will show that $p$ is a periodic point of $f$.

Let $U$ be the interior of the closed set
\[ T=\{(x,g)\in X\times \mathscr S : g^j(x)\in K \textrm{ for all } j\geq 0 \} \]
and let $U_f$ be the slice $\{x\in X:(x,f)\in U\}$ (nonempty by assumption).  Note that each slice $\bigcap\limits_{j\geq 0}g^{-j}(K)$ of $T$ is holomorphically convex.  Clearly, $U_f$ is open and relatively compact, and $f(U_f)\subset U_f$.  We claim that $U_f$ is Runge in $X$.\footnote{We take a Runge open subset to be Stein by definition.}  Let $L\subset U_f$ be compact.  Denote by $\widehat L$ the $\O(X)$-hull of $L$.  Find a compact neighbourhood $L'$ of $L$ and an open neighbourhood $W$ of $f$ such that $L'\times W\subset U$.  Then the neighbourhood $\widehat{L'}\times W$ of $\widehat L\times \{f\}$ is contained in $T$, so $\widehat L\times\{f\}\subset U$ and $\widehat L\subset U_f$.  (To see that $\widehat{L'}$ is indeed a neighbourhood of $\widehat L$, we use the fact that the interior of a holomorphically convex compact subset of a Stein manifold is Runge; see \cite[Proposition 2.7]{FS1992}.)  This shows that $U_f$ is Runge.\footnote{In the same way, the set $A_{f_0}$ in \cite[Claim 2]{AL2019} can be shown to be Runge if we assume, as we may, that the compact sets $\Lambda_k$ are holomorphically convex.  The Runge property is needed in the proof of \cite[Lemma 1]{AL2019}.  The authors neglected to mention this in \cite{AL2019}.}  The connected component $U_0$ of $U_f$ containing $p$ is also Runge.

Since $p$ is non-wandering, there is a smallest integer $\ell\geq 1$ such that $f^\ell(U_0)$ intersects $U_0$, and then $f^\ell(U_0)\subset U_0$.  Then $p$ is non-wandering for $g=f^\ell$.  We claim that $p$ is in fact recurrent for $g$ (and hence for $f$).  To prove this, let $(V_k)$ be a decreasing neighbourhood basis of $p$.  For each $k$, there is $j_k$ such that $g^{j_k}(V_k)$ intersects $V_k$.  Either $(j_k)$ has a strictly increasing subsequence or a constant subsequence (these are not mutually exclusive, of course).  In the latter case, $p$ is a periodic point of $g$ and hence of $f$, so we are done, so let us assume that $(j_k)$ is strictly increasing.  Now $(g^{j_k})$ has a subsequence, say itself, that converges locally uniformly on $U_f$ to a limit $h:U_f\to X$.  If $h(p) \neq p$, then $h(V_k)$ does not intersect $V_k$ for large enough $k$, so $g^{j_k}(V_k)$ does not intersect $V_k$ for large enough $k$, which contradicts the non-wandering assumption.  Hence $h(p)=p$, so $p$ is recurrent for $g$.  

For $j=0,\ldots,\ell-1$, denote by $U_j$ the connected component of $U_f$ containing $f^j(U_0)$.  We have $g(U_j)\subset U_j$.  Let $\Omega=U_0\cup \cdots\cup U_{\ell-1}$.  Clearly, $\Omega$ is Runge and $f(\Omega)\subset \Omega$.  We may assume that the sequence $m_k=j_{k+1}-j_k$ is strictly increasing and that $g^{m_k}$ converges to a holomorphic map $\rho$ on $\Omega$.  Since $g^{m_k}\circ g^{j_k} = g^{j_{k+1}}$, $\rho$ fixes $p$.  Let $M\subset \Omega$ be the subvariety of fixed points of $\rho$, and let $M_0$ be the connected component of $M$ containing $p$.

\noindent
{\it Claim.}  The map $\rho$ satisfies $\rho^2=\rho$ on a neighbourhood of $M_0$.
 Hence $M_0$ is a closed submanifold of $\Omega$.

\noindent{\it Proof.}  Consider the sequence $r_k = m_k-j_k= j_{k+1}-2j_k$, which we may assume is strictly increasing.  A subsequence of $(g^{r_k})$ converges to a map $\eta$ on $U_0$ fixing $p$.  Then
\[ \eta\circ h=\rho \quad\textrm{ on } h^{-1}(U_0),\]
\[ h\circ \rho = h \quad\textrm{ on } \rho^{-1}(U_0).\]
Hence in a neighbourhood of $p$,
\[\rho^2=\eta\circ h\circ \rho=\eta\circ h=\rho,\]
so $\rho^2=\rho$ in a neighbourhood of $M_0$.  Being the image of a holomorphic retraction, $M_0$ is smooth, and the claim is proved.

As $f$ commutes with $\rho$, it follows that $f(M)\subset M$.  Since $g^{j_k}(p)\to p$, we have $g^s(M_0)\subset M_0$ for some $s\geq 1$.  Assume that $\dim M_0\geq 1$; otherwise $p$ is a periodic point for $f$ and we are done.  We will show that this assumption leads to a contradiction.

Choose the smallest  $s$ such that $g^s(M_0)\subset M_0$.  For $j=0,\ldots,s\ell-1$, set $p_j=f^{j}(p)$, and let $M_j$ be the connected component of $M$ containing $p_j$.  Every point $p_j$ satisfies $g^{j_k}(p_j)\to p_j$, so $h(p_j)=p_j$.  Arguing as before, we see that each $M_j$ is a closed submanifold of $\Omega$.  The closed submanifold $\Sigma=M_0\cup\cdots\cup M_{s\ell-1}$ of $\Omega$ (possibly disconnected) is $f$-invariant.  Observe that $m_k$ is a multiple of $s$ for all $k$ large enough.  For each $j$, a sequence of some of the iterates of $g^s$ converges to the identity on $M_j$, so $g^s\vert_{M_j}$ is an automorphism of $M_j$ (it is obviously injective and, by Rouch\'e, surjective), which implies that $f\vert_\Sigma$ is an automorphism of $\Sigma$.

Since $\Sigma$ is a closed submanifold of the relatively compact domain $\Omega$ in $X$, the group $\Aut\, \Sigma$ of holomorphic automorphisms of $\Sigma$ has the structure of a finite-dimensional real Lie group.  Since a sequence of some of the iterates of $f$ converges to the identity on $\Sigma$, the closure $G$ of the subgroup $\{f^n:n\in\mathbb Z\}$, obviously an abelian subgroup of $\Aut\, \Sigma$, is compact (by the lemma of Weil that says that a cyclic subgroup of a locally compact Hausdorff group is either discrete or relatively compact).

Arguing as in the proof of Theorem 5 in \cite{AL2019}, we see that $G$ is isomorphic to a product of a real torus and a finite abelian group, and its orbits are totally real, so the $G$-orbit $Gx$ of any $x\in \Sigma$ is a compact totally real submanifold of $\Sigma$.  Since $\Omega$ is Runge, $\widehat{Gx}\subset \Sigma$.  As in \cite[Claim 6]{AL2019}, it may be shown that there is $q\in\Sigma$ such that $\widehat{Gq}=Gq$.

If $Gq$ is finite, then $q$ is a periodic point of $f$.  Since a sequence of iterates of $f$ converges to the identity on $\Sigma$, $q$ is not attracting.  As in the proof of \cite[Claim 3]{AL2019}, using Lemma \ref{l:perturbation}, we can perturb $f$ so as to obtain an eigenvalue of absolute value strictly bigger than 1, which gives a contradiction.

Assume, finally, that the holomorphically convex totally real submanifold $Gq$ of $\Sigma$ has positive dimension. Let $b\geq 1$ be such that $f^{b}$ is in the identity component $G_0$ of $G$.  There is a $1$-parameter subgroup $(h_t)_{t\in \mathbb{R}}$ of $G_0$ such that $h_1=f^b$.  Consider the vector field $\xi=\dfrac{\partial}{\partial t}h_t\bigg\vert_{t=0}$ on $\Sigma$.  It is holomorphic and it does not have any zeros, for if it did, $f$ would have a periodic point in $\Sigma$.  Morover, since $h_t$ is a limit of iterates of $f$ for each $t$, the vector field $\xi$ is $f$-invariant.  We now obtain a contradiction using Lemma \ref{l:contrad} with $A=\mathscr S$ and $H=Gq$ as soon as we show that $\xi$ is approximable by $\mathscr S$-velocities on $Gq$.

The vector field $\xi$ is holomorphic on $\Sigma$, which is a closed complex submanifold of the Stein open set $\Omega$, so $\xi$ extends holomorphically to $\Omega$.  Since $\Omega$ is a neighbourhood of $Gq$, and $Gq$ is holomorphically convex in $X$, we can approximate $\xi$ uniformly on $Gq$ by a holomorphic vector field $\theta$ on $X$.  

In the automorphism case, since $X$ has the density property, $\theta$ can be approximated uniformly on $Gq$ by a vector field $\eta$ which is the sum of complete holomorphic vector fields $v_1, \dots, v_m$ on $X$.  Let $\varphi^j_t$ be the flow of $v_j$ and let $\Psi_t=\varphi^m_t\circ \cdots\circ \varphi^1_t \in\Aut\, X$, $t\in\C$.  Then $\dfrac{\partial}{\partial t}\Psi_t\bigg\vert_{t=0}=\eta$ on $X$, so $\eta$ is an $\Aut(X)$-velocity approximating $\xi$ on $Gq$. 
  
In the endomorphism case, we let $\Phi$ be the flow of $\theta$, viewed as a holomorphic map from a neighbourhood of $\{0\}\times X$ in $\C\times X$ to $X$.  Since $X$ has the Oka property, $\Phi$ may be approximated on a neighbourhood of $\{0\}\times Gq$ by a holomorphic map $\Psi:\C\times X\to X$ with $\Psi_0=\id_X$.  Then $\dfrac{\partial}{\partial t}\Psi_t\bigg\vert_{t=0}$ is an $\End(X)$-velocity approximating $\xi$ on $Gq$.

In summary, assuming that $f$ is robustly non-expelling at the non-wandering point $p$, we have shown that $p$ is a periodic point of $f$, so the proof is complete in this case.  Note that in the automorphism case, we have not yet used the assumption that $p$ is tame for $f$, and in the endomorphism case, we have not invoked homogeneity of $X$.

\smallskip\noindent
{\bf Step 2.}  Now assume that $f$ is not robustly non-expelling at $p$, meaning that for every neighbourhood $W$ of $f$ in $\mathscr S$, every neighbourhood $V$ of $p$, and every compact subset $K$ of $X$, there is $g\in W$ and a point in $V$ whose $g$-orbit is not contained in $K$.  In the remainder of the proof, the assumption that $p$ is non-wandering is not needed.

First consider the automorphism case.  If $f^{-1}$ is robustly non-expelling at $p$, then we apply Step 1 to $f^{-1}$ in place of $f$, noting that $p$ is a non-wandering point of $f^{-1}$, and conclude that $p$ is a periodic point of $f^{-1}$ and hence of $f$.  So let us assume that neither $f$ nor $f^{-1}$ is robustly non-expelling at $p$.

Let $K\subset X$ be compact and holomorphically convex.  The hypothesis that  neither $f$ nor $f^{-1}$ is robustly non-expelling at $p$ gives $g_1\in \Aut\, X$ and $q_1\in X$ arbitrarily close to $f$ and $p$, respectively, such that the forward $g_1$-orbit of $q_1$ is not contained in $K$, say $g_1^{m_1+1}(q_1)\notin K$, with $m_1\geq 1$ as small as possible, and also gives $g_0\in \Aut\, X$ and $q_0\in X$ arbitrarily close to $f$ and $p$, respectively, such that the backward $g_0$-orbit of $q_0$ is not contained in $K$, say $g_0^{-m_0-1}(q_0)\notin K$, with $m_0\geq 1$ as small as possible.  The assumption that $p$ is tame is designed to allow us to take $q_0=q_1=:q$ and $g_0=g_1=:g$.

We can now use \cite[Theorem 2]{Varolin2000} to find $h\in\Aut\, X$ as close to the identity as we wish on $K$, such that $h$ fixes $g^{-m_0}(q),g^{-m_0+1}(q),\ldots, g^{m_1}(q)$, and $h(g^{m_1+1}(q))=g^{-m_0-1}(q)$.  Then $h\circ g$ has $q$ as a periodic point and $h\circ g$ is as close to $g$ as we wish on $g^{-1}(K)$.  Since $X$ has the density property, it has automorphisms arbitrarily close to the identity that interchange $p$ and any sufficiently nearby point (this can be proved in the same way as \cite[Proposition 1]{AL2019}, ignoring preservation of volume).  To complete the proof of part (a) of the theorem, we conjugate $h\circ g$ by such an automorphism.

Now consider the endomorphism case.  Take neighbourhoods $W$ of $f$ and $V$ of $p$ and a holomorphically convex compact subset $L$ of $X$.  By assumption, there are $g\in W$ and $q\in V$ such that the $g$-orbit of $q$ is not contained in $L$.  Say $g^k(q)\in L$ for $0\leq k<m$ and $g^m(q)\notin L$.  Let $\phi:X\to X$ be continuous, equal to the identity on a neighbourhood of $L$, and with $\phi(g^m(q))=q$.  Since $X$ is Stein and Oka, $\phi$ can be deformed to $h\in\mathscr S$, arbitrarily close to the identity on $L$, such that $h(g^k(q))=g^k(q)$ for $1\leq k<m$ and $h(g^m(q))= q$.  Then $h\circ g$ is arbitrarily close to $g$ on $g^{-1}(L)$ with $q$ as a periodic point.  This concludes the proof of part (c) of the theorem.  

Finally, if $X$ is homogeneous, then $X$ has automorphisms arbitrarily close to the identity that interchange $p$ and any sufficiently nearby point, and we can complete the proof of part (b) of the theorem by conjugating $h\circ g$ by such an automorphism.
\end{proof}

\begin{proof}[Proof of Corollary \ref{c:generic-density}]
We follow the proof of Theorem 6.1 in \cite{FS1997}.  As before, let $X$ be a Stein manifold with the density property or with the Oka property, and let $\mathscr S$ be the space of automorphisms or endomorphisms of $X$, respectively.  

Let $\{U_n:n\geq 1\}$ be a countable basis for the topology of $X$.  Let $S_n$ be the open set of all $f\in\mathscr S$ such that $f$ has a hyperbolic cycle intersecting $U_n$.  Then $G=\bigcap \mathscr S\setminus \partial S_n$ is a residual subset of $\mathscr S$.  We will show that if $f\in G$, then the set $P$ of hyperbolic periodic points of $f$ is dense in the set $\Omega$ of non-wandering points of $f$ that, in the automorphism case, are also tame for $f$.

Suppose that $f\in G$ and $\Omega\cap U_n\neq\varnothing$.  It suffices to show that $P\cap U_n\neq\varnothing$.  By definition of $G$, we have $f\notin\partial S_n$.  Hence either $f\in S_n$, in which case $P\cap U_n\neq\varnothing$ is immediate, or $f\notin\overline S_n$.  The latter case is ruled out by the closing lemma (Theorem \ref{t:closing}).  Indeed, by the closing lemma (for endomorphisms, the weak version suffices), if $p\in \Omega\cap U_n$, then there are morphisms in $\mathscr S$, arbitrarily close to $f$, with hyperbolic periodic points arbitrarily close to $p$.
\end{proof}

Let us recall a definition from \cite{AL2019}.  We call an automorphism {\it expelling} if the set of points with relatively compact forward orbit (this set is $F_\sigma$) has no interior.  Note that a chaotic automorphism is expelling.

\begin{proof}[Proof of Theorem \ref{t:chaotic-generic}]
Let $X$ and $\omega$ be as in Theorem \ref{t:chaotic-generic}.  As in \cite{AL2019}, and closely following the proof of \cite[Theorem 5]{AL2019}, we first prove the weaker result that expelling automorphisms are generic in $A$.  We will argue by contradiction, using Lemma \ref{l:contrad}, so let us assume that expelling automorphisms are not generic in $A$.

Being closed in $\Aut_\omega X$, $A$ is a Baire space.  By Baire category arguments as in the proof of \cite[Claim 1]{AL2019}, there is $f\in A$ which is robustly non-expelling in $A$.  Thus there is a compact set $K\subset X$ such that $U=U_{K,f}$ is a nonempty, open, relatively compact, forward $f$-invariant subset of $X$.  (The set $U_{K,f}$ was defined above, just before the statement of Lemma \ref{l:contrad}.)  By \cite[Claim 2]{AL2019}, $U$ is completely invariant by $f$.

No point $x_0\in U$ can be periodic for $f$.  Indeed, suppose that $x_0$ is periodic for $f$ with period $m$.  Assume that the differential $d_{x_0} f^m$ admits an eigenvalue with absolute value $\lambda$ strictly bigger than $1$.  Then for each $j\geq 1$, the map $f^{mj}: U\to U$ admits an eigenvalue with absolute value $\lambda^j$.  Let $\gamma$ be a holomorphic disc in $U$ tangent to an associated eigenvector.  Then the family of holomorphic discs $f^{mj}\circ \gamma : \mathbb D \to U \Subset X$, $j\geq 1$, contradicts Cauchy estimates at $x_0$.
 
Since $f$ preserves the holomorphic volume form $\omega$, its holomorphic Jacobian determinant is $1$.  Hence, if $d_{x_0} f^m$ has no eigenvalue with absolute value strictly bigger than $1$, then all the eigenvalues of $d_{x_0} f^m$ have absolute value $1$.  Assume this.  Let $v$ be an eigenvector for $d_{x_0} f^m$.  Let $x_j=f^j(x_0)$ for $j\geq 0$.  By \cite[Theorem 6]{AL2019}, there is $h\in \Aut_\omega X$ such that: 
\begin{enumerate}
\item $h$ is arbitrarily close to $\id_X$.
\item $h(x_j)=x_j$ for $j=0,\ldots,m-1$.
\item $d_{x_0}h(v)=\alpha v$, with $\alpha>1$.
\item $d_{x_j}h=\id$ for $j=1,\ldots,m-1$.
\end{enumerate}
From the proof of \cite[Theorem 6]{AL2019}, it is easy to see that the automorphism $h$ is a finite composition of time maps of complete divergence-free holomorphic vector fields, and thus belongs to $A$.  Let $f_1=h\circ f$.  Then $x_0$ is a periodic point of period $m$ for $f_1$ and $v$ is an eigenvector of $d_{x_0}f_1^m$ whose eigenvalue has absolute value strictly greater than $1$.  If $h$ is close enough to $\id_X$, then $f_1$ is close enough to $f$ that the point $(x_0, f_1)$ belongs to $U$.  We obtain a contradiction as before.

Let $\Omega_0$ be a connected component of $U$ and let $\Omega$ be the union of all the connected components in the $f$-cycle of $\Omega_0$.  Clearly $\Omega$ is completely $f$-invariant.  Since $f$ is volume-preserving, $\Omega$ has a finite number of connected components, and since every component is Kobayashi hyperbolic, it follows that $\Aut\,\Omega$ is a real Lie group.  As explained in the proof of Theorem \ref{t:closing}, $\Omega$ can be shown to be Runge.

Let $G$ be the closure in $\Aut\,\Omega$  of the subgroup generated by $f$.  Arguing as in \cite[Claims 4--5--6]{AL2019}, we see that $G$ is compact and abelian, every orbit of $G$ is totally real, and there is an orbit $Gq$ which is  $\O(X)$-convex.  Since $Gq$ is totally real, $\dim Gq\leq n$, and since $f$ admits no periodic points in $U$, $\dim Gq\geq 1$. 
 
The compact set $Gq$ will play the role of $H$ in Lemma \ref{l:contrad}.  The vector field $\xi$ on $Gq$ that will contradict the lemma is provided by the following lemma.

\begin{lemma}
On a neighbourhood of $Gq$, there is a holomorphic vector field $\xi$, which is $f$-invariant, divergence-free, and zero-free, such that the cohomology class of $\xi\rfloor\omega$ in $H^{n-1}(Gq)$ lies in the image of $H^{n-1}(X)$. 
\end{lemma}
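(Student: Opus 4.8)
The plan is to construct $\xi$ in three stages: first on the closed submanifold $\Sigma$ (the union of components of $M$) where it arises from the one-parameter group structure, then extend holomorphically to a Stein neighbourhood, and finally arrange the cohomological constraint on $\xi\rfloor\omega$. As in Step 1 of the proof of Theorem \ref{t:closing} and in \cite[Theorem 5]{AL2019}, since $G$ is a compact abelian Lie group containing a sequence of iterates of $f$ converging to the identity, some power $f^b$ lies in the identity component $G_0$, which is a torus, so there is a one-parameter subgroup $(h_t)_{t\in\mathbb R}$ with $h_1=f^b$. Set $\xi_0=\frac{\partial}{\partial t}h_t\big|_{t=0}$ on $\Omega$ (or on the relevant submanifold). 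This $\xi_0$ is holomorphic, and it is zero-free because a zero would be a fixed point of all $h_t$, hence a periodic point of $f$ in $U$, which we have just excluded. It is $f$-invariant because each $h_t$ is a limit of iterates of $f$ and commutes with $f$, so $d f\circ\xi_0=\xi_0\circ f$. Because each $h_t$ preserves $\omega$ (being a limit of $\omega$-preserving maps), $\xi_0$ is divergence-free: $\mathcal L_{\xi_0}\omega=\frac{\partial}{\partial t}h_t^*\omega\big|_{t=0}=0$, equivalently $d(\xi_0\rfloor\omega)=0$ since $d\omega=0$.

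Next I would restrict attention to a neighbourhood of $Gq$ and adjust the cohomology. The issue is that the closed $(n-1)$-form $\xi_0\rfloor\omega$ defines a class in $H^{n-1}$ of the neighbourhood, which restricts to a class in $H^{n-1}(Gq)$; there is no reason a priori for this to come from $H^{n-1}(X)$. The key observation is that $f$ acts as the identity on $H^{n-1}(X)$ by hypothesis on $A$, and $Gq$ is $\mathcal O(X)$-convex, so by the Oka--Weil type approximation for differential forms on holomorphically convex sets, the restriction map $H^{n-1}(X)\to H^{n-1}(Gq)$ (through a Stein neighbourhood basis) is the relevant comparison. I would argue that $[\xi_0\rfloor\omega]$ is $f$-invariant in $H^{n-1}(Gq)$ — which is automatic since $\xi_0$ is $f$-invariant and $f^*\omega=\omega$ — and then use the averaging over the compact group $G$ together with the identification of $Gq$ as a $G$-orbit to realise the class as the restriction of a $G$-invariant, hence (after the $\mathcal O(X)$-convexity and the action of $f$ on $H^{n-1}(X)$ being trivial) an $X$-global class. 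Concretely: average $\xi_0\rfloor\omega$ over $G$ with Haar measure to get a $G$-invariant closed form in the same cohomology class on a $G$-invariant neighbourhood of $Gq$; then use that on the totally real $Gq$ the cohomology is computed by $G$-invariant forms, and compare with the pullback of a generator of the relevant piece of $H^{n-1}(X)$ via the cup-product hypothesis on $[\omega]$.

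Finally I would replace $\xi_0$ by the vector field $\xi$ dual (via $\omega$) to the adjusted form: since $\omega$ is nowhere zero, contraction with $\omega$ is an isomorphism from vector fields to $(n-1)$-forms on a neighbourhood of $Gq$, so a closed $(n-1)$-form $\beta$ with $[\beta]\in\operatorname{Im}(H^{n-1}(X)\to H^{n-1}(Gq))$ and $\beta$ close to $\xi_0\rfloor\omega$ corresponds to a divergence-free vector field $\xi$ close to $\xi_0$; closeness keeps $\xi$ zero-free on the compact set $Gq$, and I would check $f$-invariance is preserved by doing the adjustment $G$-equivariantly (so that in particular it is $f^b$- and then $f$-invariant, using that $f$ permutes the finitely many components of $\Omega$ and acts compatibly). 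The main obstacle I anticipate is precisely this last cohomological bookkeeping: ensuring simultaneously that the corrected form is closed, stays in the image of $H^{n-1}(X)$, remains $f$-invariant (not merely $f^b$-invariant), and that the correction is small enough to preserve zero-freeness. The cup-product hypothesis on the class of $\omega$ in $H^n(X)$ is presumably what makes the target class in $H^{n-1}(Gq)$ hit by $H^{n-1}(X)$; unwinding exactly how that hypothesis feeds in — likely via writing $[\omega]=[\alpha]\smile[\beta_0]$ with $[\alpha]\in H^1(X)$, $[\beta_0]\in H^{n-1}(X)$, and relating $\xi_0\rfloor\omega$ to $\beta_0$ up to the action of the flow — is the delicate point.
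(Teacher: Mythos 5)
There is a genuine gap. Your proposal starts from the one-parameter-group vector field $\xi_0=\frac{\partial}{\partial t}h_t\big|_{t=0}$ and then tries to ``adjust'' the class $[\xi_0\rfloor\omega]\in H^{n-1}(Gq)$ into the image of $H^{n-1}(X)$; you yourself flag this adjustment as the unresolved delicate point, and indeed it does not work as described. Averaging over $G$ does not change a cohomology class, and there is no reason a fixed subspace (the image of $H^{n-1}(X)\to H^{n-1}(Gq)$) should contain a class close to $[\xi_0\rfloor\omega]$; so the ``small correction preserving zero-freeness'' step has nothing to correct towards. The missing idea is a dichotomy on $\dim Gq$. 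If $1\le\dim Gq\le n-1$, your $\xi_0$ already works for a trivial reason you did not notice: $\xi_0$ is tangent to the $G$-orbits, so $\xi_0\rfloor\omega$ restricts to the \emph{zero} form on $Gq$ (plugging $n-1$ tangent vectors of $Gq$ together with the tangent vector $\xi_0(x)$ into $\omega$ gives $n$ vectors in a space of dimension at most $n-1$), and the zero class is of course in the image of $H^{n-1}(X)$. No cohomological adjustment is needed.

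If $\dim Gq=n$, the one-parameter-group field must be abandoned entirely. The correct construction goes the other way: one first chooses the target class and then builds $\xi$ from it. Using the hypothesis, write $\omega=\alpha\wedge\beta+d\gamma$ with $\alpha$ a closed $1$-form and $\beta$ a closed $(n-1)$-form on $X$; one checks that $\beta$ cannot be exact on $Gq$ (otherwise $\omega$ would be exact on $Gq$, but $\omega|_{Gq}$ is nowhere zero and invariant under the transitive action of $Gq\cong\mathbb T^n\times E$ on itself, forcing $\omega|_{Gq}=0$, a contradiction). Thus $b=[\beta]\ne 0$ in $H^{n-1}(Gq)$ and $b$ is by construction in the image of $H^{n-1}(X)$. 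Since $Gq$ is a torus (times a finite group) and $\omega|_{Gq}$ is an invariant volume form, there is a \emph{unique} $\mathbb T^n$-invariant vector field $\xi$ on $Gq$ with $[\xi\rfloor\omega]=b$; $f$-invariance then follows from uniqueness together with the hypothesis that $f$ acts as the identity on $H^{n-1}(X)$ (so $f_*b=b$), and zero-freeness follows from invariance under the transitive action plus $b\ne 0$ --- not from proximity to a zero-free field. This is where the cup-product hypothesis and the hypothesis on the $f$-action on $H^{n-1}(X)$ actually enter, and neither is used correctly in your outline.
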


\begin{proof}
There are two cases.  Suppose first that $1\leq \dim Gq\leq n-1$.  Take $s\geq 1$ such that $f^s$ lies in the identity component $G_0$ of $G$.  There is a 1-parameter subgroup $(g_t)_{t\in\mathbb R}$ of $G_0$ such that $g_1=f^s$.  Consider the vector field $\xi=\dfrac{d}{dt}g_t\bigg\vert_{t=0}$ on $\Omega$.  It is holomorphic, divergence-free, tangent to the $G$-orbits in $\Omega$, and does not have any zeros, for if it did, $f$ would have a periodic point in $\Omega$, which is absurd.  Moreover, $\xi$ is $f$-invariant, since all elements of $G$ commute with $f$.  The restriction of $\xi\rfloor\omega$ to $Gq$ is the zero form.  This is clear if $\dim Gq\leq n-2$.  If $\dim Gq=n-1$ and we take vectors $v_1,\ldots,v_{n-1}$ in the tangent space of $Gq$ at $x\in Gq$, then the vectors $\xi(x), v_1,\ldots,v_{n-1}$ are linearly dependent because $\xi$ is tangent to $Gq$, so $\xi\rfloor\omega(v_1,\ldots,v_{n-1}) =\omega(\xi(x),v_1,\ldots,v_{n-1})=0$.
 
Suppose now that $\dim Gq=n$.  Then $Gq$ may be identified with a Lie group of the form $\mathbb T^n\times E$, where $E$ is a finite abelian group, embedded as a totally real real-analytic submanifold of $X$.  By the assumption on $\omega$, we may write $\omega = \alpha\wedge\beta+d\gamma$, where $\alpha$ is a closed 1-form on $X$, $\beta$ is a closed $(n-1)$-form, and $\gamma$ is an $(n-1)$-form (these forms may be taken to be holomorphic).  The form $\beta$ cannot be exact on $Gq$.  Indeed, suppose that $\beta$ is exact; then so is $\omega$.  In suitable local holomorphic coordinates $z_1,\ldots,z_n$ on $X$ at each of its points, $Gq$ is defined by the equations $\Im z_1=0,\ldots,\Im z_n=0$, so $\omega\vert_{Gq}$ has no zeros.  On the other hand, since $\omega\vert_{Gq}$ is exact and invariant under the action of $f$ and hence the action of $G$ and hence the action of $Gq$ on itself, we conclude that $\omega\vert_{Gq}=0$, which gives a contradiction. 
 
Since $\beta$ is not exact on $Gq$, its cohomology class $b$ is nonzero in $H^{n-1}(Gq)$.  There is a unique $\mathbb{T}^n$-invariant real-analytic vector field $\xi$ on $Gq$ such that $b=[\xi\rfloor \omega]$.  We claim that $\xi$ is not only $\mathbb{T}^n$-invariant, but also $f$-invariant (or equivalently ($\mathbb{T}^n\times E$)-invariant).  Note first that the vector field ${f}_*\xi$ is $\mathbb{T}^n$-invariant.  Indeed, if $\tau\in\mathbb{T}^n$, then
\[\tau_*({f}_*\xi ) ={f}_*(\tau_*\xi)={f}_*\xi ,\]
since both $\tau$ and $f$ act as elements of the abelian group $G$.  Since $f\in A$ acts as the identity on $H^{n-1}(X)$, restricting to $H^{n-1}(Gq)$, we obtain ${f}_*b=b$, so
\[ [\xi\rfloor \omega]=b={f}_*b=[({f}_*\xi)\rfloor \omega].\]
From the uniqueness of $\xi$ we conclude that ${f}_*\xi=\xi$.  Also, $\xi$ is not identically zero on $Gq$ since $b\neq 0$. From the ($\mathbb{T}^n\times E$)-invariance of $\xi$, it immediately follows that $\xi$ has no zeros on the orbit $Gq$.  Finally, $\xi$ extends to a holomorphic, $f$-invariant vector field on a neighbourhood of $Gq$.  Invariance of $\xi$ and $\omega$ on $Gq$ implies that the flow of $\xi$ preserves $\omega$.  The same holds for the extension of $\xi$, so it is divergence-free.
\end{proof}

In order to prove the genericity of expelling automorphisms in $A$, it remains to show that $\xi$ can be approximated by $A$-velocities on $Gq$.  Let $U_1$ be a tubular neigbourhood of $Gq$ on which $\xi$ is defined.  Then the cohomology class of $\xi\rfloor\omega$ in $H^{n-1}(U_1)$ lies in the image of $H^{n-1}(X)$.  If $U_2\subset U_1$ is a Runge neighbourhood of $Gq$, then the cohomology class of $\xi\rfloor\omega$ in $H^{n-1}(U_2)$ lies in the image of $H^{n-1}(X)$.  Hence we can approximate $\xi$ uniformly on $Gq$ by a divergence-free holomorphic vector field $\tilde\eta$ in $X$.

Since $X$ has the volume density property, $\tilde \eta$ can be approximated uniformly on $H$ by a vector field $\eta$ which is the sum of complete divergence-free holomorphic vector fields $v_1, \dots, v_m$ on $X$.  The vector field $\eta$ is an $A$-velocity.  Indeed, if we let $\varphi^j_t$ be the flow of $v_j$ and set, for $t\in \C$,
\[ \Psi_t=\varphi^m_t\circ \cdots\circ \varphi^1_t \in A, \] 
then
\[ \frac{\partial}{\partial t}\Psi_t(x)\bigg\vert_{t=0}=\eta(x) \quad\textrm{for all }x\in X. \]

We have shown that expelling automorphisms are generic in $A$.  The proof that chaotic automorphisms are generic in $A$ is exactly the same as the proof of Theorem 1 in \cite[Section 5]{AL2019}, noting that the automorphism $g$ that was obtained there from Varolin's \cite[Theorem 2]{Varolin2000} is a composition of time maps and therefore an element of $A$.
\end{proof}

The proof of Theorem \ref{t:chaotic-generic} yields the following result, similar to \cite[Theorem 7]{AL2019}, but now expressing a stronger consequence of the existence of robustly non-expelling volume-preserving automorphisms.

\begin{proposition}
Let $X$ be a Stein manifold of dimension $n\geq 2$ satisfying the volume density property with respect to a holomorphic volume form $\omega$.  Let $A$ be a closed submonoid of $\Aut_\omega X$ such that:
\begin{itemize}
\item  $A$ contains all time maps of complete divergence-free vector fields on $X$.
\item  Every automorphism in $A$ acts as the identity on $H^{n-1}(X)$.
\end{itemize}
Suppose that expelling automorphisms are not generic in $A$.

{\rm (a)}  Then there is an automorphism in $A$ that is robustly non-expelling in $A$.

{\rm (b)}  Every automorphism $f\in A$ that is robustly non-expelling in $A$ has a total orbit whose closure $Z$ is the union of finitely many, mutually disjoint, $n$-dimensional, holomorphically convex, totally real, real-analytic tori in $X$, such that there is no divergence-free, zero-free holomorphic vector field $\xi$ on any neighbourhood of $Z$, such that $\xi$ is $f$-invariant on $Z$ and the cohomology class of $\xi\rfloor\omega$ in $H^{n-1}(Z)$ lies in the image of $H^{n-1}(X)$. 
\end{proposition}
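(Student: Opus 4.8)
The plan is to read off both assertions from the proof of Theorem~\ref{t:chaotic-generic}, observing that the cohomological hypothesis on $\omega$ was used there only inside the lemma, and only in the case $\dim Gq = n$. For part~(a), I would argue exactly as at the start of that proof: since $A$ is closed in $\Aut_\omega X$ it is a Baire space, and since expelling automorphisms are not generic in $A$, the Baire category argument of \cite[Claim~1]{AL2019} produces an $f\in A$ that is robustly non-expelling in $A$.

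For part~(b), fix any $f\in A$ that is robustly non-expelling in $A$, and run the proof of Theorem~\ref{t:chaotic-generic} unchanged up to and including the construction of the $\O(X)$-convex orbit $Gq$: one obtains a compact $K\subset X$ with $U=U_{K,f}$ nonempty, open, relatively compact, forward $f$-invariant and, by \cite[Claim~2]{AL2019}, completely $f$-invariant; $f$ has no periodic point in $U$ (the volume-preserving eigenvalue argument via \cite[Theorem~6]{AL2019} and Cauchy estimates); the completely invariant union $\Omega$ of the connected components in the $f$-cycle of a component of $U$ has finitely many components, each Kobayashi hyperbolic, and is Runge; the closure $G$ of the subgroup generated by $f$ in $\Aut\,\Omega$ is compact and abelian with totally real orbits, and there is $q\in\Omega$ with $Gq$ compact, totally real, real-analytic, $\O(X)$-convex and of dimension $1\le\dim Gq\le n$. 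None of this uses the cohomological hypothesis. Since $G$ is the closure of $\langle f\rangle$ and $\Omega$ is completely $f$-invariant, the compact set $Z:=Gq$ is precisely the closure of the total $f$-orbit of $q$, contained in $\Omega\subset X$.

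Next I would show $\dim Gq = n$: if instead $1\le\dim Gq\le n-1$, the first case of the lemma in the proof of Theorem~\ref{t:chaotic-generic} applies verbatim, with no appeal to the cohomological hypothesis, producing a holomorphic, divergence-free, zero-free, $f$-invariant vector field $\xi$ on a neighbourhood of $Gq$ whose contraction $\xi\rfloor\omega$ restricts to the zero form on $Gq$, so $[\xi\rfloor\omega]=0$ in $H^{n-1}(Gq)$ lies trivially in the image of $H^{n-1}(X)$; the approximation argument at the end of that proof (Runge approximation of divergence-free vector fields subject to this cohomological constraint, followed by the volume density property, using that $A$ contains the time maps of complete divergence-free fields) then shows $\xi$ is approximable by $A$-velocities uniformly on $Gq$, and since $Gq$ is a nonempty, $f$-invariant, compact subset of $U_{K,f}$ this contradicts Lemma~\ref{l:contrad}. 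Hence $\dim Gq = n$, and as in the second case of that lemma's proof $Z=Gq$ is identified with $\mathbb T^n\times E$ for a finite abelian group $E$, embedded as a totally real real-analytic submanifold; being disjoint connected components of an $\O(X)$-convex set, its finitely many copies of $\mathbb T^n$ are mutually disjoint and individually holomorphically convex. Finally, were there a divergence-free, zero-free holomorphic vector field $\xi$ on some neighbourhood of $Z$, $f$-invariant on $Z$, with $[\xi\rfloor\omega]$ in $H^{n-1}(Z)$ in the image of $H^{n-1}(X)$, the same approximation argument would make $\xi$ approximable by $A$-velocities on $Z$, and Lemma~\ref{l:contrad} with $H=Z$ would give a contradiction; so no such $\xi$ exists on any neighbourhood of $Z$.

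I do not expect a substantive obstacle, since the argument merely repackages the proof of Theorem~\ref{t:chaotic-generic}; the points requiring care are all bookkeeping: checking that every step up to the production of $Gq$, as well as the alternative $\dim Gq\le n-1$, is genuinely independent of the cohomological hypothesis on $\omega$; verifying that the individual tori in $Z$ are holomorphically convex (a standard fact about disjoint connected components of an $\O(X)$-convex compact set in a Stein manifold); and recording that $Z=Gq$ is the closure of a total orbit because $G$ is the closure of $\langle f\rangle$.
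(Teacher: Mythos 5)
Your proposal is correct and follows essentially the same route as the paper, which offers no separate argument for this proposition but simply observes that the proof of Theorem \ref{t:chaotic-generic} yields it; your write-up spells out exactly the intended extraction (Baire category for (a); rerunning the argument to produce $Gq$, ruling out $\dim Gq\leq n-1$ via the first case of the lemma together with Lemma \ref{l:contrad}, and then reading off the non-existence statement from Lemma \ref{l:contrad} for (b)). The bookkeeping points you flag — that $Gq$ is the closure of the total orbit of $q$, and that the individual tori are holomorphically convex — are handled correctly.
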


We hope that this result may be of help in constructing examples or proving non-existence of robustly non-expelling automorphisms.  We conclude the paper by showing that examples will not be found among time maps of global flows.

\begin{proposition}
Let $\xi$ be a complete holomorphic vector field without zeros on a Stein manifold $X$.  Let $\Phi_t$, $t\in\C$, be the time maps of the flow of $\xi$.  Then $\Phi_t$ is not robustly non-expelling in $\{\Phi_t:t\in\C\}\subset\Aut\, X$ for any $t\in\C$, that is, $\Phi_t$ is expelling for all $t$ in a dense $G_\delta$ subset of $\C$.
\end{proposition}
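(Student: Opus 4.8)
The plan is to isolate one observation and deduce both halves of the statement from it. Call it the \emph{core lemma}: if $x_0\in X$ and $U\subseteq\C$ contains discs $D(a_j,r_j)$ with $r_j\to\infty$, then the orbit $\{\Phi_w(x_0):w\in U\}$ is not relatively compact in $X$. Indeed, suppose it is contained in a compact set $K$; embed $X$ as a closed submanifold of some $\C^N$ and set $C=\sup_{y\in K}\lVert y\rVert<\infty$. For each $j$ and each $\rho<r_j$, the map $\zeta\mapsto\Phi(a_j+\rho\zeta,x_0)$ is holomorphic from $\overline{\mathbb D}$ into $K\subseteq\C^N$, so the Cauchy estimate gives $\lVert\rho\,\xi(\Phi_{a_j}(x_0))\rVert\le C$, and letting $\rho\uparrow r_j$ we get $\lVert\xi(\Phi_{a_j}(x_0))\rVert\le C/r_j\to 0$. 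Choosing a subsequence along which $\Phi_{a_j}(x_0)$ converges in $K$ to some $y$, continuity of $\xi$ gives $\xi(y)=0$, contradicting that $\xi$ has no zeros. (Completeness of $\xi$ is used here: $w\mapsto\Phi_w(x_0)$ is entire.)

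For the first assertion, suppose $\Phi_t$ is robustly non-expelling in $A=\{\Phi_s:s\in\C\}$, witnessed by a neighbourhood $W$ of $\Phi_t$ in $A$, a nonempty open $V\subseteq X$, and a compact $K\subseteq X$ with $g^j(V)\subseteq K$ for all $g\in W$ and $j\ge 0$. Since $s\mapsto\Phi_s$ is continuous into $\Aut\,X$, there is $\epsilon>0$ with $\Phi_s\in W$ whenever $\lvert s-t\rvert\le\epsilon$, so for any $x_0\in V$, using $\Phi_s^j=\Phi_{js}$, we get $\Phi_w(x_0)\in K$ for every $w\in\bigcup_{j\ge 0}D(jt,j\epsilon)$. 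This set contains discs of radius $j\epsilon\to\infty$ (it is all of $\C$ when $t=0$), so the core lemma yields a contradiction. Hence no $\Phi_t$ is robustly non-expelling in $A$.

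For the second assertion, a routine Baire argument inside a relatively compact open set shows that $\Phi_t$ fails to be expelling precisely when there are a nonempty open $V\subseteq X$ and a compact $K\subseteq X$ with $\Phi_{jt}(V)\subseteq K$ for all $j\ge 0$. Fixing a countable basis $\{V_n\}$ for the topology of $X$ and a compact exhaustion $\{K_m\}$, we obtain
\[ \{t\in\C:\Phi_t\text{ is not expelling}\}=\bigcup_{n,m}B_{n,m},\qquad B_{n,m}=\bigcap_{x\in V_n}\bigcap_{j\ge 0}\{t\in\C:\Phi_{jt}(x)\in K_m\}. \]
Each $B_{n,m}$ is an intersection of sets that are closed by continuity of the flow, hence closed; and if $B_{n,m}$ contained a disc $D(t_0,\delta)$, then for any $x_0\in V_n$ we would have $\Phi_w(x_0)\in K_m$ for all $w\in\bigcup_{j\ge 1}D(jt_0,j\delta)$, again a union of discs of radius $\to\infty$, contradicting the core lemma. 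So every $B_{n,m}$ is closed with empty interior, the union is meagre by the Baire category theorem, and its complement $\{t:\Phi_t\text{ is expelling}\}$ is a dense $G_\delta$ in $\C$. There is no serious obstacle: the content is the core lemma, where the Cauchy estimate and the zero-freeness of $\xi$ do all the work once one observes the elementary fact that the relevant sets of times $\bigcup_j D(jt,j\epsilon)$ always contain discs of unbounded radius; the neighbourhood-basis reduction in the first part and the Baire bookkeeping in the second are routine.
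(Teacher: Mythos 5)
Your proof is correct and rests on the same mechanism as the paper's: a Cauchy estimate applied to the holomorphic map $w\mapsto\Phi_w(x_0)$ on discs of unbounded radius in the time plane forces $\lVert\xi\rVert$ to become arbitrarily small on the compact set $K$, contradicting zero-freeness together with compactness; your inequality $\lVert j\epsilon\,\xi(\Phi_{jt}(x_0))\rVert\le C$ is exactly the paper's bound on $\dfrac{\partial}{\partial t}(\Phi_t\circ\Phi_1)^j(p)\big\vert_{t=0}=j\xi(\Phi_1^j(p))$ in different clothing. The only genuine addition is that you spell out the Baire argument expressing the set of non-expelling times as a countable union of closed sets $B_{n,m}$ with empty interior, which the paper leaves implicit in the phrase \lq\lq that is\rq\rq.
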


\begin{remark}
(a)  As a simple illustration, consider the divergence-free vector field $z\dfrac\partial{\partial z}+w\dfrac\partial{\partial w}$ on $\C^{*2}$ with $\Phi_t(z,w)=(e^t z, e^t w)$.  If $t$ is imaginary, $\Phi_t$ is not expelling (every orbit is relatively compact), but if $t$ is not imaginary, $\Phi_t$ is expelling.

(b)  If $X$ has the volume density property and $\xi$ is divergence-free, then, without the assumption that $\xi$ has no zeros, we can prove that $\Phi_t$ is not robustly non-expelling in the closure of the subgroup of finite compositions of time maps of complete divergence-free vector fields on $X$, because fixed points of a robustly non-expelling volume-preserving automorphism can be ruled out as in the proof of Theorem \ref{t:chaotic-generic}.
\end{remark}

\begin{proof}
Suppose that $\Phi_1$, say, is robustly non-expelling in $\{\Phi_t:t\in\C\}$, so there are $\delta>0$, a nonempty open subset $V$ of $X$, and a compact subset $K$ of $X$ such that $\Phi_{t+1}^j(V)\subset K$ for all $j\geq 0$ and $\lvert t\rvert<\delta$.  Take $p\in V$.  As in the proof of Lemma \ref{l:contrad}, and with the same notation as there, on the one hand, there is a constant $C$ such that
\[ \bigg\lVert\frac{\partial}{\partial t}(\Phi_t\circ \Phi_1)^j(p)\bigg\vert_{t=0}\bigg\rVert\leq C \quad \textrm{for all }j\geq 0. \]
On the other hand, 
\[ \dfrac{\partial}{\partial t}(\Phi_t\circ \Phi_1)^j(p)\bigg\vert_{t=0} = j\xi(\Phi_1^j(p))\] 
and 
\[ \lVert \xi(\Phi_1^j(p)) \rVert \geq \inf\limits_K\lVert\xi\rVert>0. \qedhere \]
\end{proof}

\end{document}